\title{Observation of vibrating systems at different time instants}
\author{Ambroise Vest}
\address{Institut de Recherche Math\'ematique Avanc\'ee,  Universit\'e de Strasbourg\\7 rue Ren\'e Descartes, 67084 Strasbourg C\'edex, France}
\email{ambroise.vest@math.unistra.fr}
\date{\today}
\subjclass[2000]{Primary 93B07; Secondary 42C99, 35L05}
\keywords{obsevability inequality, wave equation, Fourier series, Diophantine approximation}
\newcommand{\C}{\mathbb{C}}
\newcommand{\Q}{\mathbb{Q}}
\newcommand{\R}{\mathbb{R}}
\newcommand{\N}{\mathbb{N}}
\newcommand{\Z}{\mathbb{Z}}
\newcommand{\om}{\omega}
\newcommand{\ud}{\,\mathrm{d}}
\newtheorem{thm}{Theorem}[section]
\newtheorem{lem}[thm]{Lemma}
\newtheorem{prop}[thm]{Proposition}
\theoremstyle{definition}
\newtheorem{deff}[thm]{Definition}
\theoremstyle{remark}
\newtheorem*{rk}{Remark}
\theoremstyle{remark}
\newtheorem*{rks}{Remarks}
\theoremstyle{remark}
\theoremstyle{remark}
\theoremstyle{remark}
\theoremstyle{remark}
\begin{document}
\maketitle
\begin{abstract}
In this paper, we obtain new observability inequalities for the vibrating string. 
This work was motivated by a recent paper by A. Szij\'art\'o and J. Heged\H us  in which the authors ask 
the question of determining the initial data  by only knowing the position of the string at two distinct time instants.
The choice of the observation instants is crucial and the estimations rely on the Fourier series expansion of the solutions and results of Diophantine approximation.
\end{abstract}
\section{Introduction}
Let $q$ be a nonnegative number. The small transversal vibrations of a string of length $\pi$ fixed at its two ends satisfy
\footnote{
The quantity $y=y(t,x)$ is the height of the string at time $t$ and abscissa $x$ while $y(t)$ stands for the map $y(t,\cdot)$. The choice of $\pi$ for the length of the string is made in order to simplify the writing in the expansion of the solutions in Fourier series.
}
\begin{equation} \label{string}
\begin{cases}
y''-y_{xx}+qy=0 & \text{in } \R\times(0,\pi),\\
y=0 & \text{in }\R\times\{0,\pi\},\\
y(0)=y_0, \quad y'(0)=y_1 & \text{in }(0,\pi).
 \end{cases}
\end{equation}

The obtaining of observability inequalities for the vibrating string and for oscillating systems in general has been the object of many works. Indeed, observability being dual to controllability (cf. D. L. Russell \cite{Russell1978}), it is often a starting point to obtain controllability results (see e.g., J.-L. Lions \cite{Lions1988}, A. Haraux \cite{Haraux1990}). 
A useful tool to obtain such inequalities is the Fourier series expansion of the solutions (cf. V. Komornik and P. Loreti \cite{KL05}).

Among all the different ways to observe the system \eqref{string}, \emph{pointwise observation} has been widely studied (see e.g., J.-L. Lions \cite{Lions1992}, A. Haraux \cite{HarauxPonct}). It consists in getting estimations of the form 
\begin{equation*}
\|(y_0,y_1)\|_{\mathcal{I}}\le c\|y(\cdot,\xi)\|_{\mathcal{O}}.
\end{equation*}
The main difficulties are the choice of the norms for the initial data $\|\cdot\|_{\mathcal{I}}$ as for the observation $\|\cdot\|_{\mathcal{O}}$ and the choice of a \emph{strategic point} $\xi$ in the domain. These particular points can be characterized by some of their arithmetical properties (see e.g., A. G. Butkovskiy \cite{But1979}, V. Komornik and P. Loreti \cite{KL11}).

Following a recent paper of A. Szij\'art\'o and J. Heged\H us \cite{SH12}, we focus  on a \emph{pointwise-in-time observation}. Such type of observation seems to have been studied at first by A. I. Egorov \cite{Egorov2008} and  L. N. Znamenskaya \cite{Znamen2010}. Given two norms, one for the initial data $\|\cdot\|_{\mathcal{I}}$ and one for the observation $\|\cdot\|_{\mathcal{O}}$, the objective is to find two times $t_0$ and $t_1$ such that 
\begin{equation}\label{generalobservation}
\|(y_0,y_1)\|_{\mathcal{I}}\le c(\|y(t_0)\|_{\mathcal{O}}+\|y(t_1)\|_{\mathcal{O}})
\end{equation}
From a practical point of view, such an inequality means that only knowing the position of the whole system at two different instants, we are able to recover the initial data $y_0$ and $y_1$. 
\begin{deff}
A pair $(t_0,t_1)$ of real numbers such that the  observability inequality \eqref{generalobservation} holds is called a \emph{strategic pair (for \eqref{generalobservation})}.
\footnote{In particular, this notion depends on the the norms in the left and right members.
}
\end{deff}

The main idea of this paper is the following : depending on how the quantity 
\begin{equation*}
\frac{t_0-t_1}{\pi}
\end{equation*}
is approximable by rational numbers,  such pointwise-in-time observability inequalities hold. The main tools are the explicit expansion of the solutions in Fourier series and classical results of Diophantine approximation. 

\bigskip
Let us describe the organization of the paper and state (informally) the main results. 

In section 2, after recalling the definition of adapted functional spaces to study the well-posedness of \eqref{string}, we reformulate the observation problem  in this setting. These spaces, denoted by $D^s$ ($s \in \R)$, correspond essentially to the domain of $-\Delta^{s/2}$. Then, we may chose two real numbers $r$ and $s$ such that $\|\cdot\|_{\mathcal{I}}=\|\cdot\|_{D^s}$ and $\|\cdot\|_{\mathcal{O}}=\|\cdot\|_{D^r}$.

In section \ref{classicalstring}, we investigate the observation of the \emph{classical string} (i.e. $q=0$). We prove (see Theorem \ref{MRclassicalstring}) the following result :
\begin{quote} 
\emph{
Assume that $r-s\ge 1$. Then, there exist strategic pairs. Moreover, if the inequality is strict, then almost all pairs are strategic. This result is optimal in the sense that there cannot be any strategic pair if $r-s<1$.
}
\end{quote}

In section \ref{more}, we prove that the difference $r-s$ (see Theorem \ref{moreobservations}) can be reduced by adding further observations.

In section \ref{loadedstring}, we focus on the \emph{loaded string} (i.e. $q>0$). First we recall the main result of \cite{SH12} in Theorem \ref{thmSH}, which states essentially that if $(t_0-t_1)/\pi$ is a \emph{rational} number along with another hypothesis, then $(t_0,t_1)$ is a strategic pair with $r-s=1$. After analyzing the occurence of such pairs under the above hypotheses in Proposition \ref{density}, we use another method to obtain new observability inequalities. We can state the following result (see Theorem \ref{MRloadedstring}):
\begin{quote}
\emph{
Assume that $r-s=1$. If $(t_0,t_1)$ is a strategic pair for the classical string, then it is also a strategic pair for the loaded string provided that $q$ is sufficiently small.
}
\end{quote}

Finally, in section \ref{other}, we extend our method to the vibrating beam and rectangular plates. 
\section{Problem setting and notations}\label{setting} 
Let us recall the construction of some useful functional spaces related to the above problem (see e.g., \cite[pp. 7--11]{Kom94}, \cite[pp. 335--340]{Brezis2}).
The functions $\sin(kx),\, k=1,2,\ldots$ form an orthogonal and dense system in $L^2(0,\pi)$. 
We denote by $D$ the vector space spanned by these functions and for $s\in \R$, we define an euclidean norm on $D$ by setting
\begin{equation*}
\Big\|\sum_{k=1}^{\infty}c_k\sin(kx)\Big\|_s^2:=\sum_{k=1}^{\infty}k^{2s}|c_k|^2.
\end{equation*}
The space $D^s$ is defined as the completion of $D$ for the norm $\|.\|_s$.
Then, $D^0$ coincide with $L^2(0,\pi)$ with equivalent norms and
more generally, it is possible to prove that for $s>0$, 
\begin{equation*}
D^s=\Big\{f\in H^s(0,\pi) : f^{(2j)}(0)=f^{(2j)}(\pi)=0,\quad \forall \, 0\le j\le\Big[\frac{s-1}{2}\Big]\Big\}.
\end{equation*}
Identifying $D^0$ with its own dual, $D^{-s}$ is the dual of $D^s$. For example,
\begin{equation*}
D^0=L^2(0,\pi), \quad D^1=H^1_0(0,\pi) \quad \text{and} \quad D^{-1}=H^{-1}(0,\pi)
\end{equation*}
with equivalent norms. 

Now, we recall a well-posedness result for the problem \eqref{string} via an expansion of the solutions in Fourier series. We set
\begin{equation*}
\om_k:=\sqrt{k^2+q},\qquad k=1,2,\ldots
\end{equation*}
\begin{prop}
Let $s \in \R$. For all initial data $y_0 \in D^s$ and $y_1 \in D^{s-1}$, the problem \eqref{string} admits a unique solution $y \in C(\R,D^s) \cap C^1(\R,D^{s-1}) \cap C^2(\R,D^{s-2})$ given by
\begin{equation} \label{solution}
 y(t,x)=\sum_{k=1}^{\infty}(a_k e^{i\om_kt}+b_k e^{-i\om_kt})\sin{kx},
\end{equation}
where the complex coefficients $a_k$ and $b_k$ satisfy
\footnote{
$A\asymp B$ means that there are two positive constants $c_1$ and $c_2$ such that $c_1 B\le A\le c_2 B$.
}
\begin{equation} \label{conservation}
 \|y_0\|_s^2+\|y_1\|_{s-1}^2\asymp\sum_{k=1}^{\infty}k^{2s}(|a_k|^2+|b_k|^2).
\end{equation}
\end{prop}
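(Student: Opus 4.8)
The plan is to verify this well-posedness proposition by substituting the proposed Fourier series \eqref{solution} into \eqref{string} and checking both that it solves the PDE formally and that the norm equivalence \eqref{conservation} holds, which in turn controls the regularity claimed. First I would observe that each term $(a_k e^{i\om_k t}+b_k e^{-i\om_k t})\sin kx$ is an exact solution of the equation $y''-y_{xx}+qy=0$ with the Dirichlet boundary conditions built in, since $-\partial_{xx}\sin kx = k^2\sin kx$ and $\om_k^2 = k^2+q$, so that the time factor $e^{\pm i\om_k t}$ carries exactly the frequency that cancels $k^2+q$. The boundary conditions $y=0$ on $\{0,\pi\}$ are automatic from $\sin kx$. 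Thus the series is a candidate solution provided it converges in the stated spaces.

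Next I would determine $a_k,b_k$ from the initial data. Evaluating \eqref{solution} at $t=0$ gives the Fourier sine coefficients of $y_0$, namely $a_k+b_k = (y_0)_k$ (the $k$-th coefficient of $y_0$ in the $\sin kx$ basis), and differentiating in $t$ and setting $t=0$ gives $i\om_k(a_k-b_k)=(y_1)_k$. Solving this $2\times2$ linear system yields
\begin{equation*}
a_k=\frac{1}{2}\Big((y_0)_k+\frac{(y_1)_k}{i\om_k}\Big),\qquad b_k=\frac{1}{2}\Big((y_0)_k-\frac{(y_1)_k}{i\om_k}\Big).
\end{equation*}
From these formulas one reads off $|a_k|^2+|b_k|^2 \asymp |(y_0)_k|^2 + \om_k^{-2}|(y_1)_k|^2$. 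Since $\om_k=\sqrt{k^2+q}\asymp k$ for $k\ge1$ (with constants depending on $q$), multiplying by $k^{2s}$ and summing gives $\sum_k k^{2s}(|a_k|^2+|b_k|^2)\asymp \sum_k k^{2s}|(y_0)_k|^2 + \sum_k k^{2s-2}|(y_1)_k|^2$, and by the definition of the $D^s$-norm the right-hand side is exactly $\|y_0\|_s^2+\|y_1\|_{s-1}^2$. This establishes \eqref{conservation}.

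The regularity and continuity-in-time statement then follows from the norm control: for fixed $t$, the map $k\mapsto a_k e^{i\om_k t}+b_k e^{-i\om_k t}$ has the same modulus bound (up to the factor $|e^{\pm i\om_k t}|=1$) as at $t=0$, so $y(t)\in D^s$ with $\|y(t)\|_s$ uniformly bounded, and the series converges in $D^s$. Continuity in $t$ with values in $D^s$ follows because the partial sums converge uniformly in $t$ (each tail is controlled by the $D^s$-norm of the corresponding tail of the data, independently of $t$), and each partial sum is manifestly continuous; the $C^1$ and $C^2$ statements follow identically after differentiating term by term, which costs one factor of $\om_k\asymp k$ per derivative and hence drops the regularity index by one per derivative, landing in $D^{s-1}$ and $D^{s-2}$. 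Uniqueness follows from the linearity of the problem together with \eqref{conservation}: any solution in the stated class has Fourier coefficients satisfying the same decoupled ODEs $c_k''+\om_k^2 c_k=0$ in each mode, whose solutions are determined by the initial conditions, so the coefficients—and hence the solution—are unique.

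I expect the main obstacle to be the rigorous justification of term-by-term differentiation and of continuity in $t$ uniformly across all regularity levels, rather than the algebra. The formal manipulations are routine, but to promote \eqref{solution} from a formal series to a genuine element of $C(\R,D^s)\cap C^1(\R,D^{s-1})\cap C^2(\R,D^{s-2})$ one must check that the series and its first two time-derivatives converge in the appropriate norms uniformly on compact time intervals; the key point making this work is precisely that $|e^{\pm i\om_k t}|=1$, so that the $D^s$-norm of the solution is conserved in time up to the constants in \eqref{conservation}, decoupling the convergence estimates from $t$ entirely. This conservation is also what makes the norm equivalence \eqref{conservation} hold at every time, not just at $t=0$, which is the feature later exploited in the observability arguments.
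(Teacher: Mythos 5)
Your proof is correct, and in fact the paper itself gives no proof of this proposition: it is stated as a recalled well-posedness result (with references to \cite{Kom94} and \cite{Brezis2}), and your argument --- solving for $a_k,b_k$ mode by mode, the equivalence $|a_k|^2+|b_k|^2\asymp|(y_0)_k|^2+\om_k^{-2}|(y_1)_k|^2$ combined with $\om_k\asymp k$, and the uniform-in-$t$ convergence of the series and its first two time derivatives --- is precisely the standard proof those references supply. The only step you gloss over is uniqueness: to say that an arbitrary solution in the stated class has coefficients solving $c_k''+\om_k^2c_k=0$, one should note that $c_k(t)=\langle y(t),\sin kx\rangle$ is $C^2$ because the pairing against $\sin kx$ is continuous on every $D^\sigma$, and that pairing the equation (which holds in $D^{s-2}$) against $\sin kx$ transfers $-\partial_{xx}+q$ to the factor $k^2+q=\om_k^2$ since the operator is diagonal in the sine basis.
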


The observability problem that we are going to investigate in this paper is the following. Given two real numbers $r$ and $s$ such that $s\le r$, we ask wether or not there are two times $t_0$ and $t_1$ such that 
\begin{equation} \label{observation2}
 \|y_0\|_s+\|y_1\|_{s-1}\leq c(\|y(t_0)\|_r+\|y(t_1)\|_r)
\end{equation}
for a positive constant $c$, independent of the initial data $(y_0,y_1)\in D^r \times D^{r-1}$.
\section{Observability of the classical string ($q=0$)}\label{classicalstring}
In this paragraph, we assume that $q=0$ in the problem \eqref{string}.
The following statement transforms the observation inequality \eqref{observation2} to a problem of Diophantine approximation.
\begin{prop}\label{thmstring}
The pair $(t_0,t_1)$ is strategic if and only if there is a positive constant $c$ such that
\footnote{
If $x$ is a real number, $\|x\|$ denotes the distance between $x$ and the nearest integer.
}
\begin{equation}\label{estthmstring}
\Big\|\frac{k(t_0-t_1)}{\pi}\Big\| \ge \frac{c}{k^{r-s}}, \qquad k=1,2,\ldots
\end{equation}
\end{prop}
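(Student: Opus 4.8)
The plan is to turn \eqref{observation2} into a family of \emph{decoupled} scalar inequalities, one for each Fourier mode, and then to recognise the resulting weight as a distance to the nearest integer. Since $q=0$ we have $\om_k=k$, so by \eqref{solution} the $k$-th coefficient of $y(t,\cdot)$ in the basis $(\sin kx)_{k\ge1}$ is $a_ke^{ikt}+b_ke^{-ikt}$, whence
\[
\|y(t)\|_r^2=\sum_{k=1}^{\infty}k^{2r}\,\bigl|a_ke^{ikt}+b_ke^{-ikt}\bigr|^2.
\]
Squaring \eqref{observation2}, using $(A+B)^2\asymp A^2+B^2$ for $A,B\ge0$ on both members, and invoking the conservation law \eqref{conservation} on the left, I would first show that $(t_0,t_1)$ is strategic if and only if there is a constant $C>0$ such that
\[
\sum_{k=1}^{\infty}k^{2s}\bigl(|a_k|^2+|b_k|^2\bigr)\le C\sum_{k=1}^{\infty}k^{2r}Q_k
\]
holds for all admissible data, where $Q_k:=\bigl|a_ke^{ikt_0}+b_ke^{-ikt_0}\bigr|^2+\bigl|a_ke^{ikt_1}+b_ke^{-ikt_1}\bigr|^2$.

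The crucial structural remark is that $Q_k$ depends only on the single pair $(a_k,b_k)$, so both members above are sums of independent nonnegative terms. Summing mode-by-mode inequalities proves sufficiency; conversely, testing the global inequality against data supported on one mode $k$ forces the corresponding scalar inequality with the same constant. Hence the displayed inequality holds for all data if and only if, for every $k$ and every $(a_k,b_k)\in\C^2$,
\[
k^{2s}\bigl(|a_k|^2+|b_k|^2\bigr)\le C\,k^{2r}\,Q_k.
\]

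The heart of the matter is then to compute the worst constant in this scalar inequality, i.e.\ to minimise $Q_k/(|a_k|^2+|b_k|^2)$. Expanding the two squared moduli gives
\[
Q_k=2\bigl(|a_k|^2+|b_k|^2\bigr)+4\cos\!\bigl(k(t_0-t_1)\bigr)\,\mathrm{Re}\!\bigl(a_k\overline{b_k}\,e^{ik(t_0+t_1)}\bigr),
\]
where the contributions of the two observation times combine into a single cosine via the sum-to-product identity. For fixed moduli $|a_k|,|b_k|$ the real part is minimised by a suitable choice of argument, and the resulting quantity $2(|a_k|^2+|b_k|^2)-4|\cos(k(t_0-t_1))|\,|a_k||b_k|$ is in turn smallest, relative to $|a_k|^2+|b_k|^2$, when $|a_k|=|b_k|$. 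This yields
\[
\min_{(a_k,b_k)\ne0}\frac{Q_k}{|a_k|^2+|b_k|^2}=2\bigl(1-|\cos(k(t_0-t_1))|\bigr).
\]

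Finally, I would use the elementary equivalence $1-|\cos\phi|\asymp\|\phi/\pi\|^2$ (writing $\phi=\pi(m\pm\|\phi/\pi\|)$ for the nearest integer $m$ and using $1-\cos x=2\sin^2(x/2)$) with $\phi=k(t_0-t_1)$. The scalar inequalities above then hold uniformly in $k$ precisely when $k^{2s}\le C'\,k^{2r}\,\|k(t_0-t_1)/\pi\|^2$ for some constant $C'$, that is, exactly when \eqref{estthmstring} holds. The main obstacle is the two-step optimisation of $Q_k$ over the arguments \emph{and} the moduli of $(a_k,b_k)$: one must check that the extremal configuration $|a_k|=|b_k|$ with the cross term made maximally negative is simultaneously attainable, since both the necessity (via single-mode test data) and the sufficiency of \eqref{estthmstring} rest on identifying this exact worst case.
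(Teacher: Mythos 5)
Your proof is correct, and its skeleton---squaring, invoking \eqref{conservation}, decoupling \eqref{observation2} into per-mode scalar inequalities, with necessity obtained by testing on data supported on a single mode---is exactly the paper's. Where you diverge is in how the optimal per-mode constant is computed. The paper introduces the linear maps $T_k(a,b)=(ae^{ikt_0}+be^{-ikt_0},ae^{ikt_1}+be^{-ikt_1})$ on $\C\times\C$, notes that $\det T_k=2i\sin k(t_0-t_1)$ (so invertibility of all the $T_k$ forces $(t_0-t_1)/\pi\notin\Q$), writes down $T_k^{-1}$ explicitly, computes $\|T_k^{-1}\|=\sqrt{1+|\cos k(t_0-t_1)|}/(\sqrt{2}\,|\sin k(t_0-t_1)|)$, and concludes via Lemma \ref{lemmasinus}, i.e. $|\sin kx|\asymp\|kx/\pi\|$. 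You instead minimise the Rayleigh quotient $Q_k/(|a|^2+|b|^2)$ directly, obtaining $2\bigl(1-|\cos k(t_0-t_1)|\bigr)$, and close with the equivalence $1-|\cos\phi|\asymp\|\phi/\pi\|^2$. The two computations agree: since $\sin^2\phi=(1-|\cos\phi|)(1+|\cos\phi|)$, one has $2(1-|\cos\phi|)=\|T_k^{-1}\|^{-2}$ at $\phi=k(t_0-t_1)$, so you are computing the square of the smallest singular value of $T_k$ where the paper computes the norm of its inverse. Your route buys a small simplification: the degenerate case $(t_0-t_1)/\pi\in\Q$ needs no separate treatment, because your minimum vanishes for some $k$ exactly when \eqref{estthmstring} fails, whereas the paper must discuss invertibility of the $T_k$ separately. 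The paper's route, in exchange, produces the explicit operators $T_k^{-1}$, which it reuses in the remarks after Theorem \ref{MRclassicalstring} to describe the reconstruction of $(y_0,y_1)$ from the two observations. Finally, the attainability issue you flag at the end is genuinely resolved by your own argument: the inner minimum $-|\cos k(t_0-t_1)|\,|a|\,|b|$ of the cross term is achievable for any fixed moduli by an appropriate choice of $\arg(a\overline{b})$, and $|a|\,|b|/(|a|^2+|b|^2)\le 1/2$ with equality when $|a|=|b|$, so the two optimisations are compatible and the worst case is attained; since sequential minimisation always yields the global minimum, no further check is needed.
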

For the proof, we need the following 
\begin{lem} \label{lemmasinus}
Set $x \in \R$. We have
\begin{equation*}
|\sin kx| \asymp \Big\|\frac{kx}{\pi}\Big\|, \qquad k=1,2,\ldots
\end{equation*}
\end{lem}
\begin{proof}[Proof of Lemma \ref{lemmasinus}]
We follow the proof of \cite[Lemma 2.3]{KL11}. Denoting by $m$ the nearest integer from $kx/\pi$,
\begin{equation*}
|\sin kx|=|\sin(kx-m\pi)|=\Big|\sin\Big(\frac{kx}{\pi}-m\Big)\pi\Big|.
\end{equation*}
We notice that $|kx/\pi-m|\pi \le \pi/2$. Hence, using the estimations  $(2/\pi)|t|\le|\sin t|\le |t|$ which hold for 
$|t|\le \pi/2$, we have
\begin{equation*}
\frac{2}{\pi}\Big|\frac{kx}{\pi}-m\Big|\pi\le\Big|\sin\Big(\frac{kx}{\pi}-m\Big)\pi\Big| \le \pi\Big|\frac{kx}{\pi}-m\Big|,
\end{equation*}
i.e.
\begin{equation*}
2\Big\|\frac{kx}{\pi}\Big\| \le |\sin kx| \le \pi \Big\|\frac{kx}{\pi}\Big\|. \qedhere
\end{equation*}
\end{proof}
\begin{proof}[Proof of Proposition \ref{thmstring}]
Using the Fourier series expansion \eqref{solution} of the solutions of \eqref{string} and the estimation \eqref{conservation}, we observe that the left member in \eqref{observation2} is equivalent
\footnote{
in the sense of the symbol $\asymp$
}
to
\begin{equation*}
\sum_{k=1}^{\infty}k^{2s}(|a_k|^2+|b_k|^2)
\end{equation*}
and the right member is equivalent to
\begin{equation*}
\sum_{k=1}^{\infty}k^{2r}(|a_ke^{ikt_0}+b_ke^{-ikt_0}|^2+|a_ke^{ikt_1}+b_ke^{-ikt_1}|^2).
\end{equation*}
Therefore, the observability inequality \eqref{observation2} holds if and only if there exist a positive constant $c'$ such that for all $k=1,2,\ldots$ and all complex numbers $a$ and $b$,
\begin{equation}\label{estmode} 
 k^{2s}(|a|^2+|b|^2)\leq c'k^{2r}(|ae^{ikt_0}+be^{-ikt_0}|^2+|ae^{ikt_1}+be^{-ikt_1}|^2).
\end{equation}

Now, for all $k$, we consider the linear maps $T_k$ in $\C\times\C$ (endowed with its usual euclidean norm) defined by
\begin{equation*}
T_k(a,b):=(ae^{ikt_0}+be^{-ikt_0},ae^{ikt_1}+be^{-ikt_1}).
\end{equation*}
Hence, the estimation \eqref{estmode} holds for all $k$ if and only if all the $T_k$ are invertible and there exists a positive constant $c''$ independent of $k$ such that
\begin{equation*}
\frac{1}{\|T_k^{-1}\|}\ge \frac{c''}{k^{r-s}}.
\end{equation*}
The determinant of $T_k$ equalling $2i\sin k(t_0-t_1)$, we deduce that all the $T_k$ are invertible if and only if $(t_0-t_1)/\pi$ is irrational. In that case, their inverses are given by 
\begin{equation*}
 T_k^{-1}(a,b)=\frac{1}{2i\sin k(t_0-t_1)}(e^{-ikt_1}a-e^{-ikt_0}b,-e^{ikt_1}a+e^{ikt_0}b)
\end{equation*}
and a computation of their norms yield
\begin{equation*}
 \|T_k^{-1}\|=\frac{\sqrt{1+|\cos k(t_0-t_1)|}}{\sqrt{2}|\sin k(t_0-t_1)|}
\end{equation*}
Thus,
\begin{equation*}
\frac{1}{\|T_k^{-1}\|} \asymp |\sin k(t_0-t_1)| \asymp \Big\|\frac{k(t_0-t_1)}{\pi}\Big\|.
\end{equation*}
The first estimation follows from the expression of $\|T_k^{-1}\|$ while the second estimation is a consequence of the Lemma \ref{lemmasinus}. We observe that if \eqref{estthmstring} holds, then $(t_0-t_1)/\pi$ must be irrational and that ensures that all the  $T_k$ are invertible. The proof is complete.
\end{proof}
\begin{thm}${}$\label{MRclassicalstring}
\begin{itemize}
\item[(a)]
If $r-s<1$, there cannot be any strategic pair.
\item[(b)]
If $r-s=1$, 
 the set of strategic pairs  has zero Lebesgue measure  and full Hausdorff dimension in $\R^2$.
\item[(c)]
If $r-s>1$, 
the set of strategic  pairs has full Lebesgue measure in $\R^2$.
\end{itemize}
\end{thm}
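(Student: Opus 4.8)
The plan is to lean entirely on Proposition \ref{thmstring}, which reduces the existence and abundance of strategic pairs to a single Diophantine condition on the quantity $\alpha:=(t_0-t_1)/\pi$, namely the existence of $c>0$ with $\|k\alpha\|\ge c\,k^{-(r-s)}$ for every $k\ge 1$. Writing $\gamma:=r-s$, all three statements are thus governed by the set $G_\gamma:=\{\alpha\in\R:\inf_{k\ge1}k^{\gamma}\|k\alpha\|>0\}$; note that for irrational $\alpha$ every term is strictly positive, so this infimum is positive exactly when $\liminf_k k^{\gamma}\|k\alpha\|>0$. The final step, common to all parts, is to transfer information from $G_\gamma\subset\R$ to the plane: the map $(t_0,t_1)\mapsto\bigl((t_0-t_1)/\pi,\,t_1\bigr)$ is linear and invertible, hence bi-Lipschitz, and carries the set of strategic pairs onto $G_\gamma\times\R$. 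Since bi-Lipschitz maps preserve both Lebesgue-negligibility and Hausdorff dimension, it suffices to analyse $G_\gamma$ on the line and then apply this change of variables.

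For (a), I would observe that a rational $\alpha$ fails the condition immediately, as $\|k\alpha\|=0$ for suitable $k$. For irrational $\alpha$, Dirichlet's approximation theorem furnishes infinitely many $k$ with $\|k\alpha\|<1/k$; along this subsequence the supposed inequality forces $c\le k^{\gamma}\|k\alpha\|<k^{\gamma-1}$, which tends to $0$ when $\gamma<1$, contradicting $c>0$. Hence $G_\gamma=\emptyset$ and no strategic pair exists.

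For (c), with $\gamma>1$, I would show the complement of $G_\gamma$ is Lebesgue-null. That complement lies in $\{\alpha:\liminf_k k^{\gamma}\|k\alpha\|=0\}$, hence in $\limsup_k A_k$ with $A_k:=\{\alpha\in[0,1):\|k\alpha\|<k^{-\gamma}\}$. Since $\mathrm{meas}(A_k)\asymp k^{-\gamma}$ and $\sum_k k^{-\gamma}<\infty$, the convergence half of the Borel--Cantelli lemma gives $\mathrm{meas}(\limsup_k A_k)=0$. Thus $G_\gamma$ has full measure in $\R$, and the bi-Lipschitz change of variables combined with Fubini yields full measure for the strategic pairs in $\R^2$.

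For (b), with $\gamma=1$, the condition $\inf_k k\|k\alpha\|>0$ is precisely the statement that $\alpha$ is \emph{badly approximable}. Two classical facts then close the argument. First, the badly approximable numbers have Lebesgue measure zero (for instance because almost every $\alpha$ has unbounded partial quotients, by the Borel--Bernstein theorem), so Fubini gives zero measure for the corresponding planar set. Second, by Jarn\'ik's theorem this set has Hausdorff dimension one; since $\dim_H(G_1\times\R)=\dim_H G_1+1=2$ (using $\dim_H(A\times B)\ge\dim_H A+\dim_H B$ together with the trivial upper bound by the ambient dimension), the bi-Lipschitz map yields full Hausdorff dimension $2$. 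The main obstacle is exactly this dimension computation: it rests on Jarn\'ik's deep full-dimension theorem, and one must check that the one-dimensional statement genuinely promotes to dimension two in the plane, which is what the product/bi-Lipschitz argument secures.
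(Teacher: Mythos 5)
Your proposal is correct and takes essentially the same route as the paper: it reduces the theorem via Proposition \ref{thmstring} to the one-dimensional Diophantine sets $E_{\alpha}$, handles (a) by the same Dirichlet argument, invokes the same classical facts (measure zero and Jarn\'ik's full Hausdorff dimension for badly approximable numbers, a Khinchin-type full-measure statement for $\alpha>1$), and transfers to $\R^2$ by the same Fubini and bi-Lipschitz/product-dimension reasoning. The only difference is cosmetic: you prove the full-measure claim in (c) directly by Borel--Cantelli instead of citing Khinchin's theorem as the paper does in Lemma \ref{lemmadioph}.
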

In the following lemma, we gather some classical results of Diophantine approximation.
\footnote{
The results concerning the Lebesgue measure are due to A. Khinchin and the result concerning the Hausdorff dimension is due to  V. Jarn\'ik 
}
 For a real number $\alpha$, we set 
$E_{\alpha}:=\{x\in \R : \exists c>0 : \, \|kx\|\ge ck^{-\alpha},\,k=1,2,\ldots\}$.
\begin{lem}[{\cite[pp. 120--121]{Cassels}}, {\cite[p. 104]{Bugeaud}}, {\cite[p. 142]{Falconer}}] \label{lemmadioph}
${}$
\begin{itemize}
\item[(a)]
If $\alpha=1$, then $E_{\alpha}$ has zero Lebesgue measure and full Hausdorff dimension in $\R$.
\item[(b)]
If $\alpha>1$, then $E_{\alpha}$ has full Lebesgue measure in $\R$.
\end{itemize}
\end{lem}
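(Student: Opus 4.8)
The plan is to reduce everything to the classical metric theory of $\|kx\|$, using the two halves of Khinchin's theorem on the measure side and Jarn\'ik's theorem on the dimension side. Throughout I work modulo $\Z$, so it suffices to study $E_{\alpha}\cap[0,1)$, and I first record the elementary reformulation: for irrational $x$, since every term $k^{\alpha}\|kx\|$ is strictly positive, one has $x\notin E_{\alpha}$ if and only if $\liminf_{k}k^{\alpha}\|kx\|=0$, i.e. $\|kx\|<c\,k^{-\alpha}$ for infinitely many $k$, for every $c>0$. The rationals, being countable, are negligible both for Lebesgue measure and (having dimension $0$) for the Hausdorff statements, so I may ignore them.

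For part (b) ($\alpha>1$) I would argue directly by Borel--Cantelli. Let $A_k:=\{x\in[0,1):\|kx\|<k^{-\alpha}\}$; since $A_k$ is a union of $k$ intervals of length $2k^{-\alpha}/k$ centred at the points $m/k$, it has measure $2k^{-\alpha}$, and $\sum_k 2k^{-\alpha}<\infty$ because $\alpha>1$. Hence almost every $x$ lies in only finitely many $A_k$, so there is $k_0(x)$ with $\|kx\|\ge k^{-\alpha}$ for all $k\ge k_0$. For the finitely many $k<k_0$ the number $\|kx\|\,k^{\alpha}$ is a positive constant at every irrational $x$, so shrinking the constant absorbs these indices and yields a single $c>0$ with $\|kx\|\ge c\,k^{-\alpha}$ for all $k$. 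Thus almost every $x$ belongs to $E_{\alpha}$, which is the assertion.

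For the Lebesgue part of (a) ($\alpha=1$) the convergence argument fails, since $\sum 1/k$ diverges, and this is exactly where I must invoke the harder, divergence half of Khinchin's theorem: if $\psi$ is non-increasing and $\sum_k\psi(k)=\infty$, then $\|kx\|<\psi(k)$ for infinitely many $k$ at almost every $x$. Applying this to $\psi(k)=1/(nk)$ for each fixed $n\in\N$ shows that $W_n:=\{x:\|kx\|<1/(nk)\text{ for infinitely many }k\}$ has full measure; intersecting over $n$ gives $\liminf_k k\|kx\|=0$ almost everywhere, i.e. the complement of $E_{1}$ is full, so $E_{1}$ is Lebesgue-null. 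Equivalently one may phrase this through continued fractions: $E_{1}$ is precisely the set of badly approximable numbers, those with bounded partial quotients, and the Borel--Bernstein theorem (Borel--Cantelli in the continued fraction setting, with $\sum 1/N=\infty$) shows that almost every $x$ has $a_n(x)\ge N$ infinitely often for every $N$, hence unbounded partial quotients.

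Finally, the full Hausdorff dimension in (a) is the genuine difficulty and is Jarn\'ik's theorem, which I expect to be the main obstacle. The strategy is to exhibit, for each integer $N$, a closed subset $F_N\subset E_1$ of dimension close to $1$. Taking $F_N$ to be the set of $x\in[0,1]$ whose continued fraction partial quotients all satisfy $1\le a_i\le N$, bounded partial quotients force $\|kx\|\ge c/k$, so indeed $F_N\subset E_1$. The set $F_N$ is a Cantor set generated by the contractions $x\mapsto 1/(a+x)$ attached to the Gauss map, and the core of the argument is the lower bound $\dim_H F_N\to1$ as $N\to\infty$, obtained by placing a natural mass distribution $\mu$ on $F_N$ and estimating, via the mass distribution principle, the $\mu$-measure of small intervals in terms of the derivatives of the relevant M\"obius maps. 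Since $E_1\supseteq F_N$ for every $N$, monotonicity of Hausdorff dimension gives $\dim_H E_1\ge\sup_N\dim_H F_N=1$, while the reverse bound is trivial as $E_1\subseteq\R$. The delicate points are the bounded-distortion estimates controlling the lengths of the fundamental cylinders and the verification of the resulting Frostman-type bound for $\mu$ uniformly across scales; these are precisely the computations carried out in the cited references, which I would follow rather than reproduce.
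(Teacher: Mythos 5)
Your proposal is correct and matches the paper's treatment: the paper gives no proof of this lemma at all, stating it as a compilation of classical results (Khinchin for both Lebesgue-measure statements, Jarn\'ik for the Hausdorff dimension) with citations to Cassels, Bugeaud and Falconer, and your reductions --- the convergence Borel--Cantelli argument for $\alpha>1$, the divergence half of Khinchin's theorem (equivalently Borel--Bernstein via continued fractions) for the nullity of $E_1$, and the bounded-partial-quotient Cantor sets $F_N\subset E_1$ with $\dim_H F_N\to 1$ --- are precisely the standard arguments those citations encapsulate. Like the paper, you defer the one genuinely hard step (Jarn\'ik's lower bound on $\dim_H F_N$ via a mass distribution with bounded distortion) to the references, which is legitimate given that the paper does the same.
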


\begin{proof}[Proof of Theorem {\ref{MRclassicalstring}}] The result is a consequence of the Proposition \ref{thmstring} and results of Diophantine approximation. 

If $\alpha<1$ then the set $E_1$ defined in the Lemma \ref{lemmadioph} is empty. Indeed, if we suppose that $x\in E_{\alpha}$, then for sufficiently large $k$, $\|kx\|\ge 1/k$ with is in contradiction with a theorem of Dirichlet (see \cite[p.4]{Cassels}) that asserts that if $x$ is irrational, then the inequality $\|kx\|<1/k$ as infinitely many solutions in $k$.

If $\alpha\ge 1$, then we use the
Lemma \ref{lemmadioph}. One can notice that the set of pairs $(t_0,t_1)\in \R^2$ such that $(t_0-t_1)/\pi \in E_{\alpha}$ has full (resp. zero) Lebesgue measure or Hausdorff dimension in $\R^2$ if $E_{\alpha}$ has full (resp. zero) Lebesgue measure or Hausdorff dimension in $\R$. For the Lebesgue measure, this results from Fubini's theorem. For the Hausdorff dimension, this is a consequence of its behaviour with a product of sets and its invariance by a bi-Lipschitz transformation (see e.g., \cite{Falconer}).
\end{proof}
\pagebreak
\begin{rks}${}$
\begin{itemize}
\item
The assertion (a) of Corollary \ref{MRclassicalstring} can be seen as an \emph{optimality result}. Indeed, it means that with only two observations, the difference $r-s$ between the orders of the Sobolev norms in the inequality \eqref{observation2} must be at least 1.
\item
One cannot obtain such estimations with only one observation. Indeed, let $t_0\in \R$. Then, the function
$
y(t,x)=\sin(t-t_0)\sin(x) 
$
is a solution to \eqref{string} with $y(0)\neq 0$ or $y'(0)\neq 0$, but $y(t_0)=0$. 
\item
If the pair $(t_0,t_1)$ is strategic, then, having only access to the two observations i.e. the position of the string at times $t_0$ and $t_1$, we can recover  the initial data $y_0$ and $y_1$ using the expansion in Fourier series of $y(t_0)$ and $y(t_1)$ and the applications $T_k^{-1}$. Moreover, the observability inequality ensures a ``continuity property'' in this reconstruction process. Indeed, if two sets of observations are close, then the two sets of associated initial data must be close too.
\item
In the same way, if $r-s\ge 1$, we can obtain estimations of the form
\begin{align*}
\|y_0\|_s+\|y_1\|_{s-1}&\le c(\|y'(t_0)\|_{r-1}+\|y(t_1)\|_r),\\
\|y_0\|_s+\|y_1\|_{s-1}&\le c(\|y(t_0)\|_r+\|y'(t_1)\|_{r-1}),\\
\|y_0\|_s+\|y_1\|_{s-1}&\le c(\|y'(t_0)\|_{r-1}+\|y'(t_1)\|_{r-1}),
\end{align*}
\item
Applying the Hilbert Uniqueness Method (see \cite{Lions1988}, \cite{Kom94}), it is possible to prove the following \emph{exact controllability} result : let $0<t_0<t_1<T$ such that the observability inequality \eqref{observation2} holds with $r=0$ and $s=-1$. Then, given initial data $(y_0,y_1) \in D^2\times D^1$, we can find two control vectors $v$ and $w$ in $D^0$ such that the solution (that can be defined rigorously)  of the inhomogeneous problem
\footnote{
$\delta$ is the Dirac mass in $0$.
} 
\begin{equation*}
\begin{cases}
y''-y_{xx}=\delta(t-t_0)v+\delta(t-t_1)w &\text{in }(0,T)\times(0,\pi),\\
y=0&\text{on }(0,T)\times\{0,\pi\},\\
y(0)=y_0,\quad y'(0)=y_1 &\text{in } (0,\pi).
\end{cases}
\end{equation*}
satisfy $y(T)=y'(T)=0$.
\end{itemize}
\end{rks}
\section{With more observations}\label{more}
In this paragraph, we still assume that $q=0$ in \eqref{string}.
In section \ref{classicalstring}, we have seen that with only two observations, it is necessary that $r-s\ge 1$ in the estimation \eqref{observation2}. In this paragraph, we prove that adding other observations, it is possible to reduce the difference $r-s$.
\begin{thm}\label{moreobservations}
Let $t_1, t_2, \ldots, t_n \in \mathbb{R}$ with $n \geq 2$, $r\in\mathbb{R}$ and set $s:=r-1/(n-1)$.
Assume that among the $(t_i-t_j)/\pi$, $1\le i,j\le n$, we can extract $n-1$ elements $\tau_1,\ldots,\tau_{n-1}$ that belong to a real algebraic extension of $\Q$ of degree $n$ and such that 
$1,\tau_1,\ldots,\tau_{n-1}$ are linearly independent over $\Q$.
Then, there exists a positive constant $c$ such that
\begin{equation*}
\|y_0\|_s+\|y_1\|_{s-1}\leq c(\|y(t_1)\|_r+\ldots+\|y(t_n)\|_r)
\end{equation*}
for all initial data $(y_0,y_1)\in D^r\times D^{r-1}$.
\end{thm}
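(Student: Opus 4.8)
The plan is to follow the strategy of Proposition \ref{thmstring}, replacing the two invertible operators $T_k$ by a single family of rectangular operators $\C^2\to\C^n$, and replacing the one-dimensional Diophantine input behind Lemma \ref{lemmadioph} by its \emph{simultaneous} analogue. First I would reduce the inequality to a mode-by-mode estimate exactly as in the proof of Proposition \ref{thmstring}: using the expansion \eqref{solution} (with $\om_k=k$ since $q=0$) and the equivalence \eqref{conservation}, the left-hand side is $\asymp\sum_k k^{2s}(|a_k|^2+|b_k|^2)$ while the right-hand side is $\asymp\sum_k k^{2r}\sum_{j=1}^n|a_ke^{ikt_j}+b_ke^{-ikt_j}|^2$. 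Hence the desired inequality is equivalent to the existence of $c'>0$ such that, for all $k$ and all $a,b\in\C$,
\begin{equation*}
k^{2s}(|a|^2+|b|^2)\le c'k^{2r}\sum_{j=1}^n|ae^{ikt_j}+be^{-ikt_j}|^2.
\end{equation*}
Introducing $S_k\colon\C^2\to\C^n$, $S_k(a,b):=(ae^{ikt_j}+be^{-ikt_j})_{1\le j\le n}$, this amounts to bounding the smallest singular value $\sigma_{\min}(S_k)$ from below: since $r-s=1/(n-1)$, it suffices to prove $\sigma_{\min}(S_k)^2\ge c''k^{-2/(n-1)}$ with $c''$ independent of $k$.

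The second step is a short spectral computation. The Gram matrix is
\begin{equation*}
S_k^*S_k=\begin{pmatrix} n & \overline{D_k}\\ D_k & n\end{pmatrix},\qquad D_k:=\sum_{j=1}^n e^{2ikt_j},
\end{equation*}
whose eigenvalues are $n\pm|D_k|$, so that $\sigma_{\min}(S_k)^2=n-|D_k|$. Since $0\le|D_k|\le n$ we have $n-|D_k|\asymp n^2-|D_k|^2$, and a direct expansion gives $n^2-|D_k|^2=4\sum_{1\le j<l\le n}\sin^2\big(k(t_j-t_l)\big)$. All terms being nonnegative, I keep only the $n-1$ of them attached to the extracted differences $\tau_1,\dots,\tau_{n-1}$ (each a distinct $(t_j-t_l)/\pi$, distinctness being forced by the $\Q$-linear independence), and apply Lemma \ref{lemmasinus} to obtain
\begin{equation*}
\sigma_{\min}(S_k)^2\asymp\sum_{1\le j<l\le n}\Big\|\frac{k(t_j-t_l)}{\pi}\Big\|^2\ge\sum_{i=1}^{n-1}\|k\tau_i\|^2\asymp\max_{1\le i\le n-1}\|k\tau_i\|^2,
\end{equation*}
the last equivalence holding because the number of terms $n-1$ is fixed. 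Everything therefore reduces to the simultaneous Diophantine lower bound $\max_i\|k\tau_i\|\ge c\,k^{-1/(n-1)}$ for all $k\ge1$.

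Establishing this last bound is the main obstacle, and it is exactly where the algebraic hypothesis enters; it is the simultaneous counterpart of the case $\alpha=1$ of Lemma \ref{lemmadioph}, asserting that $(\tau_1,\dots,\tau_{n-1})$ is \emph{badly approximable}, the exponent $1/(n-1)$ being optimal by Dirichlet's simultaneous approximation theorem. The role played by measure-theoretic genericity in Theorem \ref{MRclassicalstring} is here taken over by arithmetic: since $1,\tau_1,\dots,\tau_{n-1}$ is a $\Q$-basis of a real field $K$ of degree $n$, the module $\mathcal O:=\Z+\Z\tau_1+\dots+\Z\tau_{n-1}$ is a full lattice in $K$ on which the field norm is bounded below, namely $|N_{K/\Q}(\omega)|\ge c_0>0$ for every nonzero $\omega\in\mathcal O$ (a Liouville-type bound, since a fixed integer multiple of $\omega$ is a nonzero algebraic integer, whose norm is a nonzero rational integer). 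This norm-form estimate is precisely what makes the tuple badly approximable. In the write-up I would invoke the corresponding classical result on tuples generating a number field; alternatively one first deduces from the same inequality the transparent linear-form bound $|q_0+\sum_i q_i\tau_i|\ge c\,(\max_i|q_i|)^{-(n-1)}$ and then passes to the simultaneous statement via Khintchine's transference principle. Once the simultaneous bound $\max_i\|k\tau_i\|\ge c\,k^{-1/(n-1)}$ is available, combining it with the two preceding steps yields $\sigma_{\min}(S_k)^2\ge c''k^{-2/(n-1)}$ and completes the proof.
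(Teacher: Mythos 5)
Your proof is correct, and it follows the paper's overall architecture --- reduction to a mode-by-mode inequality, a lower bound by $\max_i\|k\tau_i\|^2$, then the simultaneous Diophantine estimate for tuples of algebraic numbers --- but the middle step is executed by a genuinely different decomposition. The paper groups the $n$ observation terms into the $n-1$ pairs $(t_1,t_p)$, assumes ``without loss of generality'' that the extracted differences all involve the common time $t_1$, and invokes for each pair the $2\times 2$ inverse-operator bound from the proof of Proposition \ref{thmstring}, arriving at the lower bound $c\sum_{p\ge 2}\sin^2\big(k(t_1-t_p)\big)(|a|^2+|b|^2)$. You instead compute the smallest singular value of the $n\times 2$ matrix exactly, via the identity $n^2-|D_k|^2=4\sum_{j<l}\sin^2\big(k(t_j-t_l)\big)$, which produces the full sum over \emph{all} pairs, from which the extracted differences can be retained directly, whichever pairs they come from. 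This buys something real: the paper's WLOG is not innocent (the extracted $\tau_i$ need not share a common index; justifying the relabeling requires noting that every difference lies in the $\Q$-span of $1,\tau_1,\ldots,\tau_{n-1}$, hence in the field $K$, so that $1,(t_1-t_2)/\pi,\ldots,(t_1-t_n)/\pi$ is again an admissible basis), whereas your exact identity makes this step unconditional and also yields explicit constants. Conversely, your handling of the final Diophantine bound is heavier than necessary: the estimate $\max_i\|k\tau_i\|\ge c\,k^{-1/(n-1)}$ is precisely the first inequality of the paper's Lemma \ref{lemmaCassels} (quoted from Cassels), applied to $n-1$ numbers spanning a real field of degree $n$, so in the write-up you can simply cite that lemma rather than re-derive it through the norm-form bound and Khintchine's transference principle --- though your sketch of that derivation is itself sound.
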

The proof relies on the following
\footnote{
The second inequality in the Lemma will only be used in section \ref{other}.
}
\begin{lem}[{\cite[p. 79]{Cassels}}]\label{lemmaCassels}
Let $x_1,\ldots,x_n$ be numbers that belong to a real algebraic extension of $\Q$ of degree $n+1$ such that $1,x_1,\ldots,x_n$ are linearly independent over $\Q$. Then, there exists a positive constant $c$, only depending on $x_1,\ldots,x_n$, such that
\begin{equation*}
\max\|kx_j\| \geq ck^{-1/n}, \qquad k=1,2,\ldots
\end{equation*}
and
\begin{equation*}
\|k_1x_1+k_2x_2+\ldots k_nx_n\| \geq c(\max |k_j|)^{-n},
\qquad (k_1,\ldots,k_n)\in \Z^n\setminus \{(0,\ldots 0)\}.
\end{equation*}
\end{lem}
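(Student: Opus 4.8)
The plan is to prove both inequalities through \emph{Liouville's inequality}: the field norm of a nonzero algebraic integer is a nonzero rational integer, hence at least $1$ in absolute value. First I would fix the arithmetic setup. Let $K$ be the degree-$(n+1)$ real algebraic extension of $\Q$ containing $x_1,\ldots,x_n$; since $1,x_1,\ldots,x_n$ are $n+1$ elements linearly independent over $\Q$ and $[K:\Q]=n+1$, they form a $\Q$-basis of $K$. Let $\sigma_0=\mathrm{id},\sigma_1,\ldots,\sigma_n$ denote the $n+1$ embeddings of $K$ into $\C$, and fix a positive integer $d$ such that all the $dx_j$ are algebraic integers. Then for every nonzero $\mu=k_0+k_1x_1+\ldots+k_nx_n$ with $k_i\in\Z$, the element $d\mu$ is a nonzero algebraic integer, so $N_{K/\Q}(d\mu)=d^{n+1}\prod_{i=0}^n\sigma_i(\mu)$ is a nonzero rational integer. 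This yields the basic inequality
\[
\prod_{i=0}^n|\sigma_i(\mu)|=|N_{K/\Q}(\mu)|\ge d^{-(n+1)}=:c_0>0,
\]
valid for every such nonzero $\mu$.

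Next I would prove the second (linear form) inequality, the direct consequence. Fix $(k_1,\ldots,k_n)\in\Z^n\setminus\{0\}$, set $\beta=k_1x_1+\ldots+k_nx_n$, let $p$ be the nearest integer to $\beta$, and put $\gamma:=\beta-p$. Then $\gamma$ is a nonzero element of $K$ (nonzero because $1,x_1,\ldots,x_n$ are independent over $\Q$ and $(k_1,\ldots,k_n)\ne 0$), with $|\gamma|=\|\beta\|$ and $\sigma_0(\gamma)=\gamma$. Writing $M:=\max_j|k_j|$, the elementary bound $|p|\le|\beta|+1\le C'M$ gives $|\sigma_i(\gamma)|=|\sum_j k_j\sigma_i(x_j)-p|\le CM$ for each $i\ge 1$, with $C$ depending only on the $x_j$. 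Feeding $\mu=\gamma$ into the basic inequality,
\[
c_0\le|\gamma|\prod_{i=1}^n|\sigma_i(\gamma)|\le\|\beta\|\cdot C^nM^n,
\]
whence $\|\beta\|\ge cM^{-n}=c(\max_j|k_j|)^{-n}$, which is the second inequality.

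The first (simultaneous) inequality is the subtle one and will be the main obstacle. The naive strategy fails: applying the basic inequality to each individual $\gamma_j:=kx_j-p_j$ (with $p_j$ the nearest integer to $kx_j$) controls only one conjugate, $|\sigma_0(\gamma_j)|\le\max_j\|kx_j\|=:\delta$, by $\delta$ while the other $n$ conjugates are $O(k)$; this yields merely $\delta\ge ck^{-n}$, far weaker than the desired $\delta\ge ck^{-1/n}$, and multiplying the $n$ inequalities together gives no improvement since the small conjugate appears only once per factor. The gain to the extremal exponent $1/n$ genuinely uses that \emph{all} the $x_j$ lie in the \emph{same} degree-$(n+1)$ field, so the $n$ approximations must be combined rather than treated separately. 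To combine them, the plan is to invoke the \emph{transference principle} of Khintchine (equivalently, Mahler's duality between the successive minima of a symmetric convex body and those of its polar body relative to dual lattices, via Minkowski's convex body theorem): simultaneous approximation of $(x_1,\ldots,x_n)$ is governed by the lattice of points $(k,kx_1-p_1,\ldots,kx_n-p_n)$, whereas the linear-form problem treated above is governed by the dual lattice, and the uniform extremal bound $\|\sum_j k_jx_j\|\ge c(\max_j|k_j|)^{-n}$ just established transfers, with the optimal exponents, to the uniform extremal bound $\max_j\|kx_j\|\ge c'k^{-1/n}$. This transference is precisely the content carried out directly in \cite[p.~79]{Cassels} by the dual Liouville computation; it is where all the difficulty of the first inequality resides, the remaining task of keeping the constants uniform in $k$ being routine once the duality is in place.
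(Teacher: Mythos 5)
The paper offers no proof of this lemma at all --- it is quoted directly from \cite[p.~79]{Cassels} --- and your argument reconstructs exactly the classical proof behind that citation: the Liouville-type norm bound $\prod_i |\sigma_i(\mu)| \ge c_0 > 0$ yields the linear-form inequality by the conjugate estimate $|\sigma_i(\gamma)| \le CM$, and the simultaneous inequality with the extremal exponent $-1/n$ then follows by the Khintchine--Mahler transference principle, which is precisely the route taken in Cassels' chapter on transference theorems. Your write-up is correct (including the accurate diagnosis that per-coordinate Liouville only gives $k^{-n}$ and that the gain to $k^{-1/n}$ requires combining the approximations through duality), and the single step you invoke as a black box is a classical named theorem, so your level of detail is at least that of the paper, which simply cites the result.
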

\begin{proof}[Proof of Theorem \ref{moreobservations}]
Adapting the method described in the proof of Theorem \ref{thmstring}, it is sufficient to obtain the estimation
\begin{equation*}
 \sum_{p=1}^n|ae^{ikt_p}+be^{-ikt_p}|^2\geq ck^{-\frac{2}{n-1}}(|a|^2+|b|^2),
\end{equation*}
where $c$ is a positive constant, independent of $a,b\in\mathbb{C}$ and $k\in\N^*$. With no loss of generality, we can assume that $\tau_p=(t_1-t_{p+1})/\pi$ for $p=1,\ldots,n-1$. We have
\begin{eqnarray*}
 \sum_{p=1}^n|ae^{ikt_p}+be^{-ikt_p}|^2 &=& \sum_{p=2}^n(\frac{1}{n-1}|ae^{ikt_1}+be^{-ikt_1}|^2+ |ae^{ikt_p}+be^{-ikt_p}|^2)\\
&\geq&c_1  \sum_{p=2}^n(|ae^{ikt_1}+be^{-ikt_1}|^2+|ae^{ikt_p}+be^{-ikt_p}|^2)\\
&\geq& c_2 \Big(\sum_{p=2}^n|\sin k(t_1-t_p|^2\Big)(|a|^2+|b|^2)\\
&\geq& c_3 \Big(\sum_{p=2}^n\Big\|\frac{k(t_1-t_p)}{\pi}\Big\|^2\Big)(|a|^2+|b|^2)\\
&\geq& c_3 \max\Big\|\frac{k(t_1-t_p)}{\pi}\Big\|^2(|a|^2+|b|^2)\\
&\geq& c_4 k^{-2/(n-1)}(|a|^2+|b|^2)
\end{eqnarray*}
for all $k=1,2,\ldots$, with positive constants $c_1,c_2,c_3,c_4$ independent of $a,b\in\mathbb{C}$.
The numbers $1,(t_1-t_2)/\pi,\ldots,(t_1-t_n)/\pi$ are independent over $\Q$. In particular the numbers $(t_1-t_p)/\pi$, $p=1,\ldots,n$ are irrational numbers. This ensures that some corresponding linear transformations on $\C\times\C$ (see the proof of Theorem \ref{thmstring}) are invertible and implies the second inequality. The third inequality is a consequence of Lemma \ref{lemmasinus} while the last inequality results from Lemma \ref{lemmaCassels}.
\end{proof}
\begin{rk}
Formally, letting the number of observations  tend to $+\infty$, setting $r=0$ and $T>0$, we recover an internal observability result :
\begin{equation*}
\|y_0\|^2_0+\|y_1\|^2_{-1}\le c\int_0^T\int_0^{\pi}|y(t,x)|^2\ud x\ud t .
\end{equation*} 
\end{rk}
\section{Observability of the loaded string ($q>0$)}\label{loadedstring}
In this paragraph, we assume that $q>0$ in \eqref{string} and that $r$ and $s$ are two real numbers such that $r-s=1$. First, let us recall the
\begin{thm}[A. Szij\'art\'o and J. Heged\H us, {\cite[Theorem 1 p.4]{SH12}}]\label{thmSH}
Let $t_0$ and $t_1$ be real numbers such that 
\begin{equation}\label{hyp1}
\frac{t_0-t_1}{\pi} \in \Q
\end{equation}
and 
\begin{equation} \label{hyp2}
\sin((t_0-t_1)\sqrt{k^2+q})\neq0,\qquad k=1,2,\ldots
\end{equation}
Then, $(t_0,t_1)$ is an strategic pair.
\end{thm}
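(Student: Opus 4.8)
The plan is to follow the same reduction that was carried out for the classical string in the proof of Proposition \ref{thmstring}. Using the Fourier expansion \eqref{solution} with frequencies $\om_k=\sqrt{k^2+q}$ and the equivalence \eqref{conservation}, the observability inequality \eqref{observation2} (with $r-s=1$) is equivalent to the existence of a positive constant $c'$ such that, for every $k$ and all complex $a,b$,
\begin{equation*}
k^{2s}(|a|^2+|b|^2)\le c'k^{2r}(|ae^{i\om_kt_0}+be^{-i\om_kt_0}|^2+|ae^{i\om_kt_1}+be^{-i\om_kt_1}|^2).
\end{equation*}
Exactly as before, I would introduce the linear maps $T_k(a,b):=(ae^{i\om_kt_0}+be^{-i\om_kt_0},\,ae^{i\om_kt_1}+be^{-i\om_kt_1})$, whose determinant is $2i\sin(\om_k(t_0-t_1))$. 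Hypothesis \eqref{hyp2} guarantees that each $T_k$ is invertible, and the same norm computation as in Proposition \ref{thmstring} gives $1/\|T_k^{-1}\|\asymp|\sin(\om_k(t_0-t_1))|$. Since $r-s=1$, the pair $(t_0,t_1)$ is strategic if and only if there is $c>0$ with
\begin{equation*}
|\sin(\om_k(t_0-t_1))|\ge \frac{c}{k},\qquad k=1,2,\ldots
\end{equation*}
So everything reduces to a uniform lower bound on $|\sin(\om_k(t_0-t_1))|$ of order $1/k$.

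Next I would exploit the arithmetic hypothesis \eqref{hyp1}. Write $(t_0-t_1)/\pi=p/Q$ with $p\in\Z$, $Q\in\N^*$, so that $\om_k(t_0-t_1)=\pi p\sqrt{k^2+q}/Q$, and by Lemma \ref{lemmasinus} the quantity $|\sin(\om_k(t_0-t_1))|$ is comparable to $\|p\sqrt{k^2+q}/Q\|$, the distance to the nearest integer. The key point is to estimate how close $p\sqrt{k^2+q}/Q$ can come to an integer. For large $k$ I would use the expansion $\sqrt{k^2+q}=k\sqrt{1+q/k^2}=k+\frac{q}{2k}+O(k^{-3})$, so that
\begin{equation*}
\frac{p\sqrt{k^2+q}}{Q}=\frac{pk}{Q}+\frac{pq}{2Qk}+O(k^{-3}).
\end{equation*}
The term $pk/Q$ is a rational with bounded denominator $Q$, hence $\|pk/Q\|$ is either $0$ or at least $1/Q$; the correction term $pq/(2Qk)$ is of order $1/k$. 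The strategy is to show that these two contributions cannot conspire to make the total distance to an integer smaller than a constant multiple of $1/k$.

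The main obstacle will be precisely this last step: controlling the interference between the "resonant" residue class where $pk/Q$ is itself (close to) an integer and the decaying perturbation $pq/(2Qk)$. I expect to argue residue-class by residue-class modulo $Q$. When $pk\not\equiv 0\pmod Q$, the distance $\|pk/Q\|\ge 1/Q$ dominates the $O(1/k)$ perturbation for all large $k$, giving a bound bounded below by a constant, which is stronger than needed. The delicate case is $pk\equiv0\pmod Q$, where $pk/Q$ is an integer and the distance is governed by the perturbation $\|pq/(2Qk)+O(k^{-3})\|\asymp pq/(2Qk)$, which is of the exact order $1/k$; here \eqref{hyp2} is what prevents the quantity from vanishing, and a careful expansion shows the distance stays $\ge c/k$. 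Combining the finitely many small values of $k$ (handled by \eqref{hyp2} directly, since each $\sin$ is nonzero) with the asymptotic analysis for large $k$ yields the uniform bound $|\sin(\om_k(t_0-t_1))|\ge c/k$, completing the proof.
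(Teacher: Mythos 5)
The paper never proves Theorem \ref{thmSH}: it is recalled verbatim from Szij\'art\'o--Heged\H us \cite{SH12}, so there is no internal proof to compare against. Your proposal is nevertheless correct, and its two halves have different status. The reduction to the bound $|\sin(\om_k(t_0-t_1))|\ge c/k$ is exactly the paper's own machinery: it is Proposition \ref{thmstring} run with the frequencies $\om_k=\sqrt{k^2+q}$, and the paper itself performs this very step when deriving \eqref{estpot} in the proof of Theorem \ref{MRloadedstring}. The Diophantine half is your own contribution and it works: writing $(t_0-t_1)/\pi=p/Q$, the expansion $\sqrt{k^2+q}=k+q/(2k)+O(k^{-3})$ splits the analysis into the nonresonant classes ($pk\not\equiv 0\pmod Q$), where $\|pk/Q\|\ge 1/Q$ dominates the $O(1/k)$ perturbation for all large $k$, and the resonant class ($pk\equiv 0\pmod Q$), where the distance to the nearest integer equals $|pq/(2Qk)+O(k^{-3})|\asymp q/k$ for large $k$ because $q>0$; the finitely many remaining $k$ are covered by \eqref{hyp2}. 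Three details to nail down in a full write-up: (i) you need $p\neq 0$, which follows from \eqref{hyp2} since $\sin 0=0$; (ii) Lemma \ref{lemmasinus} must be invoked in its general form $|\sin y|\asymp\|y/\pi\|$ for arbitrary real $y$ --- its proof gives exactly that, and the paper uses the same adaptation for the plate; (iii) in the resonant class at large $k$ it is the asymptotics, not \eqref{hyp2}, that keeps the sine away from zero, so your remark that \eqref{hyp2} ``prevents the quantity from vanishing'' there is misplaced --- \eqref{hyp2} matters only for the finitely many small $k$, as your closing sentence correctly states. As a by-product, your analysis shows the order $1/k$ is actually attained along the resonant subsequence, so for rational $(t_0-t_1)/\pi$ the exponent $r-s=1$ cannot be improved, which is consistent with Theorem \ref{MRclassicalstring}(a).
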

Are such hypotheses easily satisfied? We can answer  this question with the following
\begin{prop}\label{density}
The set of strategic pairs satisfying the hypotheses \eqref{hyp1} and \eqref{hyp2} is dense in $\R^2$.
\end{prop}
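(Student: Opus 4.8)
\textbf{Proof plan for Proposition \ref{density}.}

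The plan is to exhibit, near an arbitrary point $(a,b)\in\R^2$ and within any prescribed distance $\varepsilon>0$, a strategic pair $(t_0,t_1)$ satisfying both \eqref{hyp1} and \eqref{hyp2}. The first hypothesis forces $(t_0-t_1)/\pi$ to be rational, so I would parametrize candidate pairs by fixing the rational difference and sliding the pair along the diagonal. Concretely, I would look for pairs of the form $t_0=a+\delta$ and $t_1=b+\delta'$ with $\delta,\delta'$ small, chosen so that $(t_0-t_1)/\pi=p/n$ is a rational number approximating $(a-b)/\pi$; since the rationals are dense in $\R$, such a rational difference can be taken arbitrarily close to $(a-b)/\pi$, and this handles \eqref{hyp1} automatically. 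The remaining freedom is to translate both $t_0$ and $t_1$ by a common small quantity, which does \emph{not} change the difference $t_0-t_1$ and hence preserves \eqref{hyp1}; the idea is to use this one-parameter family of translations to additionally enforce \eqref{hyp2}.

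First I would fix the rational difference $t_0-t_1=\pi p/n$ and write $t_0-t_1=:\tau$, a fixed number. Condition \eqref{hyp2} then reads $\sin(\tau\sqrt{k^2+q})\neq 0$ for all $k=1,2,\ldots$, which is a condition on $\tau$ alone and does not involve the common translation at all. So the real task is to choose the rational $\tau$ (close to the target $(a-b)$) so that $\tau\sqrt{k^2+q}\notin\pi\Z$ for every $k$. I would argue that the set of ``bad'' values of $\tau$ for which some $k$ fails is countable: for each fixed $k$ and each integer $m$, the equation $\tau\sqrt{k^2+q}=m\pi$ has the single solution $\tau=m\pi/\sqrt{k^2+q}$, so the exceptional set is a countable union over $(k,m)$ of single points. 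Within the rationals dense near $(a-b)$ I can then avoid this countable exceptional set, picking $\tau=\pi p/n$ with $\tau\sqrt{k^2+q}\notin\pi\Z$ for all $k$, i.e. such that $\sqrt{k^2+q}\,p/n\notin\Z$ for all $k$; equivalently $n^2(k^2+q)$ is never the square of an integer multiple structure forcing rationality of $\sqrt{k^2+q}$. With $\tau$ so chosen, \eqref{hyp2} holds, and by Theorem \ref{thmSH} the pair is strategic.

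Having fixed a good rational difference $\tau$ close to $a-b$, I would then set $t_0:=a$ and $t_1:=a-\tau$; since $\tau$ is within $\varepsilon$ of $a-b$, the pair $(t_0,t_1)$ lies within $\varepsilon$ of $(a,b)$ in $\R^2$, and by construction it satisfies \eqref{hyp1} and \eqref{hyp2}, hence is strategic by Theorem \ref{thmSH}. As $(a,b)$ and $\varepsilon$ were arbitrary, this establishes density.

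The main obstacle I anticipate is \emph{ensuring that \eqref{hyp2} can be met simultaneously for all $k$ while keeping $\tau$ rational and close to the target}. The worry is whether the countably many exceptional values $m\pi/\sqrt{k^2+q}$ could conspire to exclude every rational in a neighborhood; but each exceptional value is itself generically irrational (it is rational only when $\sqrt{k^2+q}$ is a rational multiple of an integer, i.e.\ when $k^2+q$ is a perfect square of a rational), so only finitely or countably many rationals $\tau$ are excluded in any interval, leaving a dense residual set of admissible rationals. The delicate bookkeeping is to confirm that the set of rationals $p/n$ with $(p/n)\sqrt{k^2+q}\in\Z$ for some $k$ is genuinely a proper (indeed meagre) subset of the rationals near the target, so that a valid choice always exists; I expect this to be the technical heart of the argument, with the translation-invariance observation doing the conceptual work of decoupling \eqref{hyp1} from \eqref{hyp2}.
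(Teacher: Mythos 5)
Your reduction is the same as the paper's: you decouple \eqref{hyp1} from \eqref{hyp2} by observing that \eqref{hyp2} depends only on the difference $\tau=t_0-t_1$, and you then seek $\tau\in\pi\Q$ close to the target difference with $\sin(\tau\sqrt{k^2+q})\neq 0$ for all $k$. But the step where you dispose of \eqref{hyp2} contains a genuine gap, and it is exactly the point where the paper's proof does all of its work. You argue: the bad set $\{m\pi/\sqrt{k^2+q}\,:\,k\in\N^*,\,m\in\Z\}$ is countable, and the rationals are dense, so a good rational $\tau$ near the target exists. This inference is invalid: your admissible candidates $\pi\Q$ form a countable set as well, and a countable dense set minus a countable set can perfectly well be empty. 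The principle ``dense minus countable is still dense'' requires the ambient admissible set to be uncountable, which is precisely what fails here; the later appeal to the excluded rationals being ``meagre'' does not help either, since every subset of $\Q$ is meagre. You flag the difficulty yourself (``the technical heart of the argument''), but you never close it, so the proposal does not prove the proposition.

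What is needed — and what the paper supplies — is structural information about the bad set, not a cardinality count. The paper splits into cases on $q$. If $q$ is irrational, no nonzero $\tau\in\pi\Q$ is bad at all: $\tau\sqrt{k^2+q}=n\pi$ with $\tau=(a/b)\pi$ would force $q=n^2b^2/a^2-k^2\in\Q$. If $q$ is a positive integer, the expansion $\sqrt{k^2+q}=k+q/(2k)+o(1/k)$ shows that $\sqrt{k^2+q}$ is an integer for at most finitely many $k$, and is irrational for all other $k$ (hence harmless against rational $\tau/\pi$); the paper then explicitly perturbs an approximating rational $a/b$ into $(p^n-1)a/(p^nb)$, where $p$ is a prime dividing none of the finitely many integer values $x_l$ of $\sqrt{k^2+q}$, which manifestly avoids every bad value. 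The rational non-integer case reduces to the integer case by clearing denominators. A softer repair of your argument along the same lines would be: once one knows that $\sqrt{k^2+q}\in\Q$ for only finitely many $k$ (this already requires the case analysis and the asymptotic argument), the set of excluded rationals is contained in $\bigcup_l\{m\pi/x_l : m\in\Z\}$, which is locally finite; any interval then contains infinitely many rationals but only finitely many bad points, and density follows. Either way, the missing ingredient is the finiteness/structure statement about the $k$'s for which $\sqrt{k^2+q}$ is rational — without it, the countability argument cannot be salvaged.
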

\begin{proof}
It is sufficient to prove that for each real number $\tau$ and each $\delta>0$, there exists a real number $\tau'$ satisfying the three conditions : $|\tau-\tau'|<\delta$, $\tau' \in \pi\Q$ and $\sin\Big(\tau' \sqrt{k^2+q} \Big) \neq 0$ for all $k=1,2,\ldots$

First, we notice that $\sin(\zeta \sqrt{k^2+q})=0$ if and only if $\zeta\sqrt{k^2+q}\in \pi\Z$. Now, we distinguish three cases.

1. \emph{If $q$ is an irrational number.} The set $\pi\Q$ being dense in $\R$, there exists a number $\tau'\in \pi\Q$ such that $|\tau-\tau'|\le \delta$. Moreover, $\tau'$ can be written $\tau'=(a/b)\pi$
with $a\in \Z$ and $b\in \N^*$ relatively primes. Assume that there exist $k\in \N^*$ and $n\in \Z$ such that
\begin{equation*}
\tau'\sqrt{k^2+q}=n\pi \quad \iff \quad \frac{a}{b}\sqrt{k^2+q}=n.
\end{equation*}
Then,
\begin{equation*}
q=\frac{n^2b^2}{a^2}-k^2\in \Q,
\end{equation*}
which is in contradiction with our assumption on $q$.

2. \emph{If $q$ is an integer}. We recall that if $(a/b)\pi\in \pi\Q$, then, $\sin((a/b)\pi\sqrt{k^2+q})=0$ if and only if $(a/b)\sqrt{k^2+q}\in \Z$.
Moreover, the quantity $\sqrt{k^2+q}$ is either an integer or an irrational number (depending on the fact that $k^2+q$ is a square or not). For sufficiently large $k$, $\sqrt{k^2+q}$ cannot be an integer. Indeed,
\begin{equation*}
\sqrt{k^2+q}=k\sqrt{1+\frac{q}{k^2}}=k\Big( 1+\frac{q}{2k^2}+o\big
(\frac{1}{k^2}\big)\Big)=k+\frac{q}{2k}+o\big(\frac{1}{k}\big)
\end{equation*}
and this is not an integer for sufficiently large $k$. Hence, for such $k$, it is an irrational number and so is $(a/b)\sqrt{k^2+q}$. Now, let $\tau'':=(a/b)\pi\in \pi\Q$ such that
\begin{equation*}
|\tau''-\tau|<\frac{\delta}{2}.
\end{equation*}
We are going to perturb a little bit the rational number $(a/b)$ in order to construct a number $\tau'$ such that the sine neither vanish.  From the above discussion, the quantity $\sqrt{k^2+q}$ can take at most a finite number of integer values when $k$ varies. We denote them by $x_1,\ldots,x_N$ (if it does not take any integer value, then it is always an irrational number and we can take $\tau'=\tau''$). Let $p$ be a prime number that is not a divisor of any of the numbers $x_1,\ldots,x_N$. For sufficiently large $n$, 
\begin{equation*}
\Big|\pi\frac{(p^n-1)a}{p^nb}-\tau\Big|<\delta.
\end{equation*}
and $p^n$ does not divide $a$. Now, two cases are possible. If $\sqrt{k^2+q}$ is not an integer, then it is an irrational number and 
\begin{equation*}
\frac{(p^n-1)a}{p^nb}\sqrt{k^2+q} \not \in \Z.
\end{equation*}
On the other hand, if $\sqrt{k^2+q}$ is an integer, then $\sqrt{k^2+q}=x_l$ for one $l\in \{1,\ldots,N\}$ and
\begin{equation*}
\frac{(p^n-1)a}{p^nb}\sqrt{k^2+q} =\frac{(p^n-1)ax_l}{p^nb}\not \in \Z.
\end{equation*}
because $p^n$ does not divide $(p^n-1)ax_l$. Finally, 
\begin{equation*}
\tau':=\frac{(p^n-1)a}{p^nb}\pi
\end{equation*}
satisfies the three expected conditions.

3. \emph{If $q$ is a rational number but not an integer.} Then, we can write $q=c/d$, where $c$ and $d$ are integers. Hence,
\begin{equation*}
\tau\sqrt{k^2+q}=\tau\sqrt{k^2+\frac{c}{d}}=\tau\sqrt{k^2+\frac{cd}{d^2}}=\frac{\tau}{d}\sqrt{k^2d^2+cd}
\end{equation*}
and we are lead back to the case where $q$ is an integer.
\end{proof}
Now, we give another method to obtain an observability result for the loaded string.
\begin{thm}\label{MRloadedstring}
Let $(t_0,t_1)$ be a strategic pair for the classical string i.e.
\begin{equation}\label{hyp3}
|\sin(k(t_0-t_1)| \ge \frac{c}{k},\qquad k=1,2,\ldots
\end{equation}
for a suitable positive constant $c$.
Then, it is also a strategic pair for the loaded string, provided that $q$ is sufficiently small.
\end{thm}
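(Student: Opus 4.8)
The plan is to repeat, for the loaded string, the reduction carried out in the proof of Proposition \ref{thmstring}, and then to treat the condition it produces as a perturbation of the classical condition \eqref{hyp3}. Expanding the solution via \eqref{solution} with $\om_k=\sqrt{k^2+q}$ and invoking the conservation law \eqref{conservation}, the observability inequality \eqref{observation2} with $r-s=1$ is again equivalent to the invertibility, with a uniform bound on $\|T_k^{-1}\|$, of the maps
\[
T_k(a,b):=(ae^{i\om_k t_0}+be^{-i\om_k t_0},\,ae^{i\om_k t_1}+be^{-i\om_k t_1}),
\]
which differ from those of Proposition \ref{thmstring} only by the substitution of $\om_k$ for $k$ in the exponents. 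Their determinant is $2i\sin(\om_k(t_0-t_1))$ and the identical computation of norms gives $1/\|T_k^{-1}\|\asymp|\sin(\om_k(t_0-t_1))|$. Hence it suffices to prove that, for $q$ small enough, there is a positive constant $c'$ with $|\sin(\om_k(t_0-t_1))|\ge c'/k$ for all $k=1,2,\ldots$

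Set $\theta:=t_0-t_1$ and $\delta_k:=(\om_k-k)\theta$, so that $\om_k\theta=k\theta+\delta_k$. The elementary identity $\om_k-k=q/(\om_k+k)$ yields $0<\om_k-k\le q/(2k)$, whence $|\delta_k|\le q|\theta|/(2k)\le q|\theta|/2$ for every $k\ge 1$. Expanding $\sin(\om_k\theta)=\sin(k\theta)\cos\delta_k+\cos(k\theta)\sin\delta_k$ and using \eqref{hyp3} together with $|\sin\delta_k|\le|\delta_k|$, I obtain
\[
|\sin(\om_k\theta)|\ge |\sin(k\theta)|\cos\delta_k-|\sin\delta_k|\ge \frac{1}{k}\Big(c\cos\tfrac{q|\theta|}{2}-\frac{q|\theta|}{2}\Big),
\]
valid as soon as $q|\theta|/2<\pi/2$ so that $\cos\delta_k>0$. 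As $q\to 0$ the bracket on the right tends to $c>0$; therefore there exists $q_0>0$ such that for $0<q<q_0$ the bracket exceeds, say, $c/2$, which gives the desired lower bound with $c'=c/2$ and completes the proof.

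The point that requires care---and the only genuine obstacle---is that the perturbation term $\sin\delta_k$ is of \emph{exactly} the same order $1/k$ as the quantity $|\sin(k\theta)|\ge c/k$ to which it is added, so it cannot be dismissed as negligible for large $k$. What saves the argument is that the size of this perturbation is governed by the factor $q|\theta|/2$, which is uniform in $k$ and can be made strictly smaller than the classical constant $c$ by shrinking $q$. Consequently the admissible range of $q$ depends on both $|\theta|=|t_0-t_1|$ and on the constant $c$ appearing in \eqref{hyp3}: the smallness of $q$ must be quantified against $c$ rather than chosen in an absolute manner.
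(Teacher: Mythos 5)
Your proof is correct and follows essentially the same route as the paper: the same reduction (via the maps $T_k$ with $\omega_k$ in place of $k$) to the lower bound $|\sin(\omega_k(t_0-t_1))|\ge c'/k$, followed by a perturbation estimate showing that the deviation from the classical case is at most of order $q|t_0-t_1|/(2k)$, hence dominated by the constant $c$ in \eqref{hyp3} once $q$ is small. The only difference is cosmetic: the paper bounds $|\sin(\omega_k(t_0-t_1))-\sin(k(t_0-t_1))|$ by the mean value theorem applied to $x\mapsto\sin\bigl(\sqrt{k^2+x}\,(t_0-t_1)\bigr)$, while you use the angle-addition formula together with $\omega_k-k=q/(\omega_k+k)$; both arguments quantify the admissible $q$ against $c$ and $|t_0-t_1|$ in the same way.
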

\begin{rk}
This result can be viewed as a complementary result to Theorem \ref{thmSH} since the hypothesis \eqref{hyp3} implies that $(t_0-t_1)/\pi$ is irrational; hence \eqref{hyp1} cannot hold.
\end{rk}
\begin{proof}
Applying the method described in the proof of Proposition \ref{thmstring}, a necessary and sufficient condition for estimation \eqref{observation2} to hold true is 
\begin{equation}\label{estpot}
|\sin (\omega_k(t_0-t_1))|=|\sin(\sqrt{q+k^2}(t_0-t_1))|\ge \frac{c'}{k},\qquad k=1,2,\ldots,
\end{equation}
where $c'$ is a positive constant, independent of $k$.

 Comparing the quantities $|\sin \omega_k(t_0-t_1)|$ and $|\sin (k(t_0-t_1))|$, we will find a sufficient condition that implies \eqref{estpot}. Let us estimate the difference
\begin{equation*}
|\sin(\sqrt{q+k^2}(t_0-t_1))-\sin( k(t_0-t_1))|.
\end{equation*}
For a fixed $k\in \N^*$, we consider the application $f_k$, defined for $x \ge 0$ by
\begin{equation*}
f_k(x):=\sin(\sqrt{k^2+x}(t_0-t_1)).
\end{equation*}
We have
\begin{align*}
|f_k'(x)|&=\frac{|\cos(\sqrt{k^2+x}(t_0-t_1))||t_0-t_1|}{2\sqrt{k^2+x}}\\
&\le \frac{|t_0-t_1|}{2k}.
\end{align*}
From the triangle inequality and the mean value theorem, 
\begin{equation*}
|f_k(0)|-|f_k(q)|\le|f_k(q)-f_k(0)|\le \frac{|t_0-t_1|q}{2k}.
\end{equation*}
Hence,
\begin{align*}
|\sin(\sqrt{q+k^2}(t_0-t_1))|&\ge |\sin (k(t_0-t_1))|-\frac{|t_0-t_1|q}{2k}\\
&\ge \frac{c}{k}-\frac{|t_0-t_1|q}{2k}
\end{align*}
and these estimations are satisfied for all $k=1,2,\ldots$ Thus, if the quantity 
\begin{equation}\label{positiveconstant}
c':=c-\frac{|t_0-t_1|q}{2}
\end{equation}
is positive, the estimation \eqref{observation2} is true. A sufficient condition is 
\begin{equation}\label{qsmall1}
q<\frac{2c}{|t_0-t_1|}. \qedhere
\end{equation}
\end{proof}
\begin{rks}${}$
\begin{itemize}
\item
The inequality \eqref{qsmall1} can be rewritten more precisely as
\footnote{
$K(x)$ denotes the largest partial quotient in the continued fraction of $x$, i.e. if the development in continued fraction of $x$ is given by $x=[a_0;a_1,a_2,\ldots]$, then $
K(x):=\sup_{k\ge 1}a_k$.
}
\begin{equation*}
q<\frac{4}{|t_0-t_1|(K((t_0-t_1)/\pi)+2)}.
\end{equation*}
Indeed, from  the proof of Lemma \ref{lemmasinus} and classical results of Diophantine approximation (see  \cite{Shallit}), the hypothesis \eqref{hyp3} holds if and only if the the number $(t_0-t_1)/\pi$ is badly approximable by rational numbers so that its partial quotients are bounded i.e. $K((t_0-t_1)/\pi)$ is finite. Moreover,
\begin{equation*}
|\sin k(t_0-t_1)|\ge 2\Big\| k\frac{t_0-t_1}{\pi}\Big\|\ge \frac{2}{(K((t_0-t_1)/\pi)+2)k}.
\end{equation*}
\item
It is possible to avoid a restriction on the size of the potential $q$. Set $\xi:=(t_0-t_1)/\pi \in \R\setminus\Q$ and 
\begin{equation*}
\nu(\xi):=\liminf_{k\to+\infty}k\|k\xi\|.
\end{equation*}
If $\xi$ is badly approximable, then $\nu(\xi)>0$. Moreover,  if $\xi'$ is an irrational number such that its partial quotients coincide with those of $\xi$ from a certain rank, then $\nu(\xi')=\nu(\xi)$ (see \cite[p. 11]{Cassels}). Let us construct a strictly decreasing sequence of irrational numbers by setting $\xi_0=\xi$ and 
\begin{equation*}
\xi_{n+1}= \frac{\xi_n}{1+\xi_n}=\frac{1}{1+1/\xi_n}.
\end{equation*}
We can assume that $0<\xi<1$ so that its development in continued fraction has the form
\begin{equation*}
\xi=\xi_0=[0;a_1,a_2,a_3,\ldots].
\end{equation*}
Therefore, $1/\xi_0=[a_1;a_2,a_3,\ldots]$ and $1+1/\xi_0=[1+a_1;a_2,a_3,\ldots]$, whence $\xi_1=[0;1+a_1,a_2,a_3,\ldots]$ and by recurrence 
\begin{equation*}
\xi_n=[0;n+a_1,a_2,a_3,\ldots].
\end{equation*}
Thus, for all $n$, $\nu(\xi_n)=\nu(\xi)>0$ and the sequence $(\xi_n)$ converges to zero. 
Now, from the definition of $\nu(\xi)$  and the Lemma \ref{lemmasinus}, we obtain, for $k$ \emph{sufficiently large},
\begin{eqnarray*}
|\sin k\pi\xi_n|\ge 2\|k\xi_n\|\ge 2\frac{\nu(\xi)}{k}.
\end{eqnarray*}
Hence, going back to the relation \eqref{positiveconstant}, if we choose $n$ sufficiently large so that 
\begin{equation*}
2\nu(\xi)-\frac{\xi_n\pi q}{2}>0
\end{equation*}
and if we assume moreover that 
\begin{equation*}
\sin(\om_k\pi\xi_n)\neq 0, \qquad k=1,2,\ldots,
\end{equation*}
then, choosing $t_0$ and $t_1$ such that $t_0-t_1=\pi\xi_n$, the observability inequality holds.
\end{itemize}
\end{rks}
\section{Extension of the method to beams and plates}\label{other}
\subsection{Observability of a hinged beam}
The small transversal vibrations of a hinged beam of length $\pi$ satisfy
\begin{equation} \label{beam}
\begin{cases}
y''+y_{xxxx}=0 & \text{in } \R\times (0,\pi),\\
y=y_{xx}=0 & \text{in } \R\times\{0,\pi\},\\
y(0)=y_0, \quad y'(0)=y_1 & \text{in } (0,\pi).
\end{cases}
\end{equation}

Using the same spaces $D^s$ as for the vibrating string, we have the 
\begin{prop}
Let $s \in \R$. For all initial data $y_0 \in D^s$ and $y_1 \in D^{s-2}$, the problem \eqref{beam} admits a unique solution $y \in C(\R,D^s) \cap C^1(\R,D^{s-2}) \cap C^2(\R,D^{s-4})$ given by
\begin{equation} \label{solutionbeam}
 y(t,x)=\sum_{k=1}^{\infty}(a_k e^{ik^2t}+b_k e^{-ik^2t})\sin{kx},
\end{equation}
where the complex coefficients $a_k$ and $b_k$ satisfy
\begin{equation} \label{conservationbeam}
 \|y_0\|_s^2+\|y_1\|_{s-2}^2\asymp\sum_{k=1}^{\infty}k^{2s}(|a_k|^2+|b_k|^2).
\end{equation}
\end{prop}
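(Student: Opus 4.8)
The plan is to mimic the Fourier-mode analysis used for the vibrating string, the only change being that each spatial mode now oscillates at frequency $k^2$ rather than $\om_k=\sqrt{k^2+q}$. First I would expand the data in the orthogonal basis $(\sin kx)_{k\ge1}$, writing $y_0=\sum_k c_k^0\sin kx$ and $y_1=\sum_k c_k^1\sin kx$, so that by definition of the spaces $\sum_k k^{2s}|c_k^0|^2=\|y_0\|_s^2<\infty$ and $\sum_k k^{2(s-2)}|c_k^1|^2=\|y_1\|_{s-2}^2<\infty$. Inserting the ansatz \eqref{solutionbeam} into \eqref{beam} and using $\partial_x^4\sin kx=k^4\sin kx$, one sees that the $k$-th mode must solve the harmonic oscillator equation $\ddot c_k+k^4c_k=0$, whose solutions are $c_k(t)=a_ke^{ik^2t}+b_ke^{-ik^2t}$; this accounts for the exponents $\pm ik^2$ in \eqref{solutionbeam}.

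Next I would fix $a_k$ and $b_k$ from the initial conditions. The relations $a_k+b_k=c_k^0$ and $ik^2(a_k-b_k)=c_k^1$ are uniquely solvable, yielding $a_k=\tfrac12(c_k^0-ic_k^1/k^2)$ and $b_k=\tfrac12(c_k^0+ic_k^1/k^2)$. A direct computation, in which the cross terms cancel, then gives the clean identity $|a_k|^2+|b_k|^2=\tfrac12(|c_k^0|^2+|c_k^1|^2/k^4)$, whence $\sum_k k^{2s}(|a_k|^2+|b_k|^2)=\tfrac12(\|y_0\|_s^2+\|y_1\|_{s-2}^2)$. This is exactly \eqref{conservationbeam}, in fact with an explicit constant rather than only up to the equivalence $\asymp$, and it simultaneously yields uniqueness, since the coefficients are forced by $(y_0,y_1)$.

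It remains to establish the stated regularity. The key structural point is that differentiating \eqref{solutionbeam} once in $t$ multiplies the $k$-th coefficient by $\pm ik^2$ and twice by $-k^4$; since $k^{2(s-2)}k^4=k^{2s}$ and $k^{2(s-4)}k^8=k^{2s}$, the three series defining $y$, $y'$ and $y''$ all carry the same weight $\sum_k k^{2s}(|a_k|^2+|b_k|^2)$. Consequently $\|y(t)\|_s$, $\|y'(t)\|_{s-2}$ and $\|y''(t)\|_{s-4}$ are bounded uniformly in $t$ by a multiple of $(\|y_0\|_s^2+\|y_1\|_{s-2}^2)^{1/2}$. To pass from boundedness to continuity in each space, I would estimate the tails $\sum_{k>N}$ of these series uniformly in $t$ (using $|e^{\pm ik^2t}|=1$), which gives uniform convergence of the partial sums on every compact time interval and hence $y\in C(\R,D^s)$; the same bound applied to the formally differentiated series legitimizes term-by-term differentiation and yields $y\in C^1(\R,D^{s-2})\cap C^2(\R,D^{s-4})$.

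I expect this last regularity step to be the only delicate part, and even there the difficulty is merely bookkeeping: everything is controlled by the single summability bound $\sum_k k^{2s}(|a_k|^2+|b_k|^2)<\infty$, so no hypothesis beyond $y_0\in D^s$, $y_1\in D^{s-2}$ is required. The argument is otherwise entirely parallel to the vibrating string, with the frequencies $\om_k$ replaced by $k^2$.
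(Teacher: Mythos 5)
The paper offers no proof of this proposition at all---like its analogue for the string in Section~\ref{setting}, it is the standard well-posedness result taken from the literature (cf.\ \cite{Kom94}, \cite{Brezis2})---and your argument is precisely the standard Fourier-mode proof that underlies it, carried out correctly: the mode equations $\ddot c_k+k^4c_k=0$, the explicit formulas for $a_k,b_k$, the parallelogram identity giving \eqref{conservationbeam} with the exact constant $\tfrac12$, and uniform-in-$t$ tail estimates for the regularity. The one point worth tightening is uniqueness: determining $a_k,b_k$ from $(y_0,y_1)$ only shows uniqueness among solutions of the form \eqref{solutionbeam}, so for uniqueness in the stated class you should additionally project an arbitrary solution onto the $k$-th mode, note that $c_k(t)$ is $C^2$ and solves the same ODE $\ddot c_k+k^4c_k=0$ with the same initial data, and conclude it coincides with $a_ke^{ik^2t}+b_ke^{-ik^2t}$; this is routine and completes the argument.
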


In this case, the observability problem turns to the following one : given two real numbers $r$ and $s$ such that $s\le r$, we are looking for two times $t_0$ and $t_1$ such that 
\begin{equation} \label{observationbeam}
 \|y_0\|_s+\|y_1\|_{s-2}\leq c(\|y(t_0)\|_r+\|y(t_1)\|_r)
\end{equation}
for a positive constant $c$, independent of the initial data $(y_0,y_1)\in D^r \times D^{r-1}$. Again, such a pair $(t_0,t_1)$ will be called an \emph{strategic pair}.
\begin{prop}
The pair $(t_0,t_1)$ is strategic if and only if there is a positive constant $c$ such that
\begin{equation*}
\Big\|\frac{k^2(t_0-t_1)}{\pi}\Big\| \ge \frac{c}{k^{r-s}}, \qquad k=1,2,\ldots
\end{equation*}
\end{prop}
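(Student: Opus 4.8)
The plan is to follow the proof of Proposition \ref{thmstring} almost verbatim, the only change being that the temporal frequency of the $k$-th mode is now $k^2$ instead of $k$, while the Sobolev weights remain $k^{2s}$ and $k^{2r}$. First I would insert the Fourier expansion \eqref{solutionbeam} into both sides of \eqref{observationbeam} and use the conservation estimate \eqref{conservationbeam}: the left-hand side is $\asymp \sum_k k^{2s}(|a_k|^2+|b_k|^2)$ and the right-hand side is $\asymp \sum_k k^{2r}(|a_k e^{ik^2 t_0}+b_k e^{-ik^2 t_0}|^2+|a_k e^{ik^2 t_1}+b_k e^{-ik^2 t_1}|^2)$. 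Since both series are diagonal in $k$, the observability inequality is equivalent to the family of mode-by-mode estimates
\begin{equation*}
k^{2s}(|a|^2+|b|^2)\le c' k^{2r}\big(|ae^{ik^2 t_0}+be^{-ik^2 t_0}|^2+|ae^{ik^2 t_1}+be^{-ik^2 t_1}|^2\big),\qquad k=1,2,\ldots,
\end{equation*}
holding for all $a,b\in\C$ with a constant $c'$ independent of $k$.

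Next I would introduce, for each $k$, the linear map $T_k$ on $\C\times\C$ defined by $T_k(a,b):=(ae^{ik^2 t_0}+be^{-ik^2 t_0},ae^{ik^2 t_1}+be^{-ik^2 t_1})$, exactly as in the proof of Proposition \ref{thmstring} but with $k^2$ in place of $k$. The mode-by-mode estimate holds for all $k$ if and only if every $T_k$ is invertible and $1/\|T_k^{-1}\|\ge c'' k^{-(r-s)}$ for a constant $c''$ independent of $k$. The determinant of $T_k$ is now $2i\sin(k^2(t_0-t_1))$, so the $T_k$ are all invertible precisely when $k^2(t_0-t_1)/\pi\notin\Z$ for every $k$. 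The same computation of the operator norm of the inverse as in Proposition \ref{thmstring} then gives
\begin{equation*}
\frac{1}{\|T_k^{-1}\|}\asymp |\sin(k^2(t_0-t_1))|.
\end{equation*}

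Finally I would convert the sine into a distance to the nearest integer. The inequality $2\|\theta/\pi\|\le|\sin\theta|\le\pi\|\theta/\pi\|$ used inside the proof of Lemma \ref{lemmasinus} holds for an arbitrary real number $\theta$; applying it with $\theta=k^2(t_0-t_1)$ yields $|\sin(k^2(t_0-t_1))|\asymp\|k^2(t_0-t_1)/\pi\|$. Combining the last two displays, the estimate $1/\|T_k^{-1}\|\ge c'' k^{-(r-s)}$ is equivalent to $\|k^2(t_0-t_1)/\pi\|\ge c\,k^{-(r-s)}$, which is the claimed condition; as in the string case this Diophantine estimate automatically forces $k^2(t_0-t_1)/\pi\notin\Z$, so the invertibility of all the $T_k$ comes for free.

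I do not expect a genuine obstacle here, since the argument is a direct transcription of Proposition \ref{thmstring}. The only point requiring a little care is the application of Lemma \ref{lemmasinus}: as stated it concerns $|\sin kx|$ for a fixed $x$ and varying integer $k$, whereas here the relevant argument $k^2(t_0-t_1)$ is quadratic in $k$. This is harmless because the underlying inequality is the universal two-sided bound on $|\sin\theta|$ recalled above, which I would either invoke directly or record as a one-line corollary of the lemma.
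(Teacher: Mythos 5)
Your proof is correct and is essentially the argument the paper intends: the paper states this proposition without proof, implicitly reusing the reduction of Proposition \ref{thmstring} (Fourier expansion, diagonal mode-by-mode estimate, the maps $T_k$, and the sine-to-nearest-integer conversion), which you carry out verbatim with $k$ replaced by $k^2$. Your cautionary remark about Lemma \ref{lemmasinus} is sound but ultimately unnecessary, since $k^2$ is itself a positive integer, so the lemma applies directly at the index $k^2$.
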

\begin{thm}${}$\label{MRbeam}
\begin{itemize}
\item[(a)]
If $r-s=2$, 
there is a set of strategic pairs that is infinite, has zero Lebesgue measure  and full Hausdorff dimension in $\R^2$.
\item[(b)]
If $r-s>2$, 
there is a set of strategic pairs that has full Lebesgue measure in $\R^2$.
\end{itemize}
\end{thm}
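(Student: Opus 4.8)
The plan is to reduce the beam condition $\|k^2(t_0-t_1)/\pi\| \ge c\,k^{-(r-s)}$ furnished by the preceding proposition to the linear Diophantine condition already analysed for the string, via the elementary but decisive observation that $\|k^2\xi\|$ is simply the value of the linear quantity $\|n\xi\|$ evaluated along the perfect squares $n=k^2$. Writing $\xi:=(t_0-t_1)/\pi$ and $\alpha:=r-s$, and recalling the sets $E_\beta=\{x:\exists c>0,\ \|kx\|\ge ck^{-\beta},\ k=1,2,\ldots\}$ from Lemma \ref{lemmadioph}, I would first establish the inclusion: membership $\xi\in E_\beta$ forces the beam condition with exponent $2\beta$. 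Indeed, if $\|n\xi\|\ge c\,n^{-\beta}$ holds for all $n\ge 1$, then specializing to $n=k^2$ gives $\|k^2\xi\|\ge c\,(k^2)^{-\beta}=c\,k^{-2\beta}$ for every $k$, so that $\xi$ produces a strategic pair as soon as $2\beta\le\alpha$.

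For part (a) I would take $\beta=1$, so that $E_1$ is exactly the set of badly approximable numbers; the inclusion then yields $\|k^2\xi\|\ge c\,k^{-2}$ for every $\xi\in E_1$, i.e. each such $\xi$ gives a strategic pair when $r-s=2$. By Lemma \ref{lemmadioph}(a), $E_1$ has zero Lebesgue measure and full Hausdorff dimension in $\R$ (and is uncountable, hence infinite). Transferring from the line to the plane exactly as in the proof of Theorem \ref{MRclassicalstring} — Fubini's theorem for the measure assertion, and the behaviour of Hausdorff dimension under products together with its bi-Lipschitz invariance for the dimension assertion — the pairs $(t_0,t_1)$ with $(t_0-t_1)/\pi\in E_1$ form an infinite set of strategic pairs of zero Lebesgue measure and full Hausdorff dimension in $\R^2$.

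For part (b), assuming $\alpha=r-s>2$, I would choose $\epsilon>0$ so small that $2(1+\epsilon)\le\alpha$, which is possible since $\alpha/2-1>0$. Taking $\beta=1+\epsilon>1$, the inclusion gives, for every $\xi\in E_{1+\epsilon}$, the bound $\|k^2\xi\|\ge c\,k^{-2(1+\epsilon)}\ge c\,k^{-\alpha}$ for all $k$, so each such $\xi$ again yields a strategic pair. By Lemma \ref{lemmadioph}(b), $E_{1+\epsilon}$ has full Lebesgue measure in $\R$, and the same Fubini transfer shows that the pairs $(t_0,t_1)$ with $(t_0-t_1)/\pi\in E_{1+\epsilon}$ constitute a set of strategic pairs of full Lebesgue measure in $\R^2$.

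The main point to keep in mind — and the reason the statement exhibits \emph{a} set of strategic pairs rather than describing all of them, in contrast with the sharp characterization of Theorem \ref{MRclassicalstring} — is that the beam condition constrains $\|n\xi\|$ only along the sparse sequence of perfect squares, hence is strictly weaker than membership in any $E_\beta$. Consequently the inclusions above need not be equalities, and determining the exact measure and dimension of the full set of strategic pairs would require the genuine metric theory of $\|n^2\xi\|$, which is considerably more delicate. The substitution $n=k^2$ sidesteps this difficulty altogether and is precisely what makes the one-sided statements claimed here accessible with the tools already at hand.
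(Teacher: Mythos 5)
Your proof is correct and is essentially the argument the paper intends (the paper states Theorem \ref{MRbeam} without writing the proof out, leaving implicit exactly this reduction): specialize the inequality defining $E_{\beta}$ to $n=k^{2}$ with $\beta=(r-s)/2$, invoke Lemma \ref{lemmadioph}, and transfer measure and dimension from $\R$ to $\R^{2}$ by the same Fubini and bi-Lipschitz/product arguments as in the proof of Theorem \ref{MRclassicalstring}. Your closing observation — that the substitution only constrains $\|n\xi\|$ along the squares, so the argument yields an inclusion rather than a characterization — is also precisely why the theorem is phrased as ``there is a set of strategic pairs'' instead of describing the full set as in the string case.
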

\subsection{Observability of a hinged rectangular plate}
Let $a$ and $b$ be positive real numbers and $\Omega=(0,a)\times(0,b)\subset\R^2$
the rectangular domain whose boundary is denoted by $\Gamma$. The small transversal vibrations of a hinged plate whose shape is delimited by $\Omega$ satisfy
\begin{equation}\label{plate}
\begin{cases}
y''+\Delta^2 y=0 & \text{in } \R\times\Omega,\\
y=\Delta y=0 & \text{in } \R\times\Gamma,\\
y(0)=y_0,\quad y'(0)=y_1 & \text{in } \Omega.
\end{cases}
\end{equation}

The eigenvalues of the operator $-\Delta$ with Dirichlet boundary conditions are (see e.g., \cite{CourantHilbert1}) 
\begin{equation*}
\lambda_{m,n}=\frac{m^2\pi^2}{a^2}+\frac{n^2\pi^2}{b^2},\qquad m,n=1,2,\ldots
\end{equation*}
with associated eigenfunctions
\begin{equation*}
e_{m,n}(x,y)=\sin\frac{mx\pi}{a}\sin\frac{ny\pi}{b}, \qquad m,n=1,2,\ldots
\end{equation*}
These functions form an orthogonal and dense system in $L^2(\Omega)$. For $s\in\R$, we define $D^s$ as the completion of the vector space spanned by the functions $e_{m,n}$ for the euclidean norm 
\begin{equation*}
\Bigg\|\sum_{m,n=1}^{\infty}c_{m,n}e_{m,n} \Bigg\|^2_s:=\sum_{m,n=1}^{\infty}\lambda_{m,n}^s|c_{m,n}|^2.
\end{equation*}

\begin{prop}
Given $y_0\in D^s$ and $y_1\in D^{s-2}$, the problem \eqref{plate} has a unique solution $y\in C(\R,D^s) \cap C^1(\R,D^{s-2})\cap C^2(\R,D^{s-4})$, whose expansion in Fourier series is
\begin{equation*} 
 y(t,x)=\sum_{m,n=1}^{\infty}(a_{m,n} e^{i \lambda_{m,n}t}+b_{m,n} e^{-i \lambda_{m,n}t})e_{m,n}(x,y),
\end{equation*}
where the complex coefficients $a_{m,n}$ and $b_{m,n}$ satisfy
\begin{equation*} 
\|y_0\|_s^2+\|y_1\|_{s-2}^2\asymp \sum_{m,n=1}^{\infty}\lambda_{m,n}^s(|a_{m,n}|^2+|b_{m,n}|^2).
\end{equation*}
\end{prop}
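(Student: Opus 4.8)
The plan is to follow the same scheme already used to establish the analogous well-posedness results for the vibrating string and the hinged beam. First I would expand the initial data along the orthogonal basis of eigenfunctions, writing $y_0=\sum_{m,n}\alpha_{m,n}e_{m,n}$ and $y_1=\sum_{m,n}\beta_{m,n}e_{m,n}$, so that by the very definition of the norms, $\|y_0\|_s^2=\sum_{m,n}\lambda_{m,n}^s|\alpha_{m,n}|^2$ and $\|y_1\|_{s-2}^2=\sum_{m,n}\lambda_{m,n}^{s-2}|\beta_{m,n}|^2$. Since $-\Delta e_{m,n}=\lambda_{m,n}e_{m,n}$, we have $\Delta^2 e_{m,n}=\lambda_{m,n}^2 e_{m,n}$; substituting the formal series $y(t)=\sum_{m,n}c_{m,n}(t)e_{m,n}$ into the equation decouples it into the scalar harmonic oscillators $c_{m,n}''+\lambda_{m,n}^2 c_{m,n}=0$, whose solutions are $c_{m,n}(t)=a_{m,n}e^{i\lambda_{m,n}t}+b_{m,n}e^{-i\lambda_{m,n}t}$. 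Matching the initial conditions gives $a_{m,n}+b_{m,n}=\alpha_{m,n}$ and $i\lambda_{m,n}(a_{m,n}-b_{m,n})=\beta_{m,n}$, which determines $a_{m,n}$ and $b_{m,n}$ uniquely.

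The norm equivalence then follows from a purely algebraic identity, and this is the point where the shift of order $2$ between $\|y_0\|_s$ and $\|y_1\|_{s-2}$ is exactly matched by the factor $\lambda_{m,n}$ carried by the frequencies. Indeed, the parallelogram-type identity $|a+b|^2+|a-b|^2=2(|a|^2+|b|^2)$ yields $|\alpha_{m,n}|^2+\lambda_{m,n}^{-2}|\beta_{m,n}|^2=2(|a_{m,n}|^2+|b_{m,n}|^2)$, so that multiplying by $\lambda_{m,n}^s$ and summing over $m,n$ gives $\|y_0\|_s^2+\|y_1\|_{s-2}^2=2\sum_{m,n}\lambda_{m,n}^s(|a_{m,n}|^2+|b_{m,n}|^2)$, an exact identity up to the factor $2$ that is stronger than the claimed $\asymp$.

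It remains to check the regularity and uniqueness. For the regularity I would verify that the three series defining $y$, $y'$ and $y''$ converge in the announced spaces, uniformly on compact time intervals. Since $|e^{\pm i\lambda_{m,n}t}|=1$, at every fixed $t$ one has $\|y(t)\|_s^2\le 2\sum_{m,n}\lambda_{m,n}^s(|a_{m,n}|^2+|b_{m,n}|^2)<\infty$; differentiating term by term brings down a factor $\lambda_{m,n}$ for $y'$ and $\lambda_{m,n}^2$ for $y''$, which is precisely compensated by passing from $D^s$ to $D^{s-2}$ and $D^{s-4}$, so that $\|y'(t)\|_{s-2}^2$ and $\|y''(t)\|_{s-4}^2$ are dominated by the same convergent sum. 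Continuity of each series in $t$, in the respective norm, follows from the normal convergence of the tails on compact time intervals, the partial sums being continuous. Uniqueness is immediate: a solution with vanishing data has all its temporal coefficients zero by the injectivity of the eigenfunction expansion.

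I do not expect a genuine obstacle here, the statement being the direct two-dimensional analogue of the string and beam results. The only point requiring a little care is the justification of term-by-term differentiation in time together with the continuity of the solution as a curve into the negative-order space $D^{s-4}$ when $s<4$; this is handled by the normal convergence of the tail series in each $D^{\sigma}$-norm on every compact interval of time.
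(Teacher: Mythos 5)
Your argument is correct and is precisely the standard construction the paper itself relies on: this proposition (like its string and beam analogues) is stated without proof, the paper deferring to the classical Fourier-series well-posedness scheme of Section~\ref{setting} (cf.\ \cite{Kom94}, \cite{Brezis2}), which is exactly your decomposition into the scalar oscillators $c_{m,n}''+\lambda_{m,n}^2c_{m,n}=0$, the parallelogram identity $|a+b|^2+|a-b|^2=2(|a|^2+|b|^2)$, and normal convergence of the term-by-term differentiated series. Note that your exact identity $\|y_0\|_s^2+\|y_1\|_{s-2}^2=2\sum_{m,n}\lambda_{m,n}^s(|a_{m,n}|^2+|b_{m,n}|^2)$ is even stronger than the stated $\asymp$, the equivalence being exact here because the temporal frequencies coincide with the eigenvalues $\lambda_{m,n}$ used as weights in the $D^s$-norms.
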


The observability problem can be stated exactly as in the previous paragraph. In other words, we are looking for pairs $(t_0,t_1)$ satisfying the estimation \eqref{observationbeam}.

From the expression of the eigenvalues,
\begin{equation*}
\lambda_{m,n} \asymp m^2+n^2.
\end{equation*}
Moreover, an adaptation of Lemma \ref{lemmasinus} yields
\begin{equation*}
|\sin\lambda_{m,n}(t_0-t_1)|\asymp\Bigg\| \frac{\lambda_{m,n}(t_0-t_1)}{\pi}\Bigg\|.
\end{equation*}
Hence, setting $\theta_1:=(\pi(t_0-t_1))/a^2$, $\theta_2:=(\pi(t_0-t_1))/b^2$ and $\alpha:=(r-s)/2$, and applying the same method as for the vibrating string, we get the 
\begin{prop}\label{thmplate}
The pair $(t_0,t_1)$ is strategic if and only if there is a positive constant $c$ such that
\begin{equation}\label{estplate}
\|m^2\theta_1+n^2\theta_2\|\ge \frac{c}{(m^2+n^2)^{(r-s)/2}},\qquad m,n=1,2,\ldots
\end{equation}
\end{prop}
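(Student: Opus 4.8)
The plan is to transpose verbatim the argument of Proposition \ref{thmstring}, replacing the single index $k$ by the double index $(m,n)$ and the frequency $k$ by the eigenvalue $\lambda_{m,n}$. First I would substitute the Fourier expansion of the solution into the observability inequality \eqref{observationbeam} and invoke the conservation estimate: the left-hand side of \eqref{observationbeam} is then equivalent to $\sum_{m,n}\lambda_{m,n}^s(|a_{m,n}|^2+|b_{m,n}|^2)$, and the right-hand side to
\[
\sum_{m,n}\lambda_{m,n}^r\big(|a_{m,n}e^{i\lambda_{m,n}t_0}+b_{m,n}e^{-i\lambda_{m,n}t_0}|^2+|a_{m,n}e^{i\lambda_{m,n}t_1}+b_{m,n}e^{-i\lambda_{m,n}t_1}|^2\big).
\]
Since both members are sums over the modes, the inequality holds if and only if it holds modewise: there is $c'>0$ such that for every $(m,n)$ and all $a,b\in\C$,
\[
\lambda_{m,n}^s(|a|^2+|b|^2)\le c'\lambda_{m,n}^r\big(|ae^{i\lambda_{m,n}t_0}+be^{-i\lambda_{m,n}t_0}|^2+|ae^{i\lambda_{m,n}t_1}+be^{-i\lambda_{m,n}t_1}|^2\big).
\]

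Next I would introduce, exactly as in the string case, the linear maps $T_{m,n}(a,b):=(ae^{i\lambda_{m,n}t_0}+be^{-i\lambda_{m,n}t_0},\,ae^{i\lambda_{m,n}t_1}+be^{-i\lambda_{m,n}t_1})$ on $\C\times\C$. Their determinant equals $2i\sin(\lambda_{m,n}(t_0-t_1))$, so the family is invertible precisely when this sine never vanishes, and the same computation of the inverse as in Proposition \ref{thmstring} yields $1/\|T_{m,n}^{-1}\|\asymp|\sin(\lambda_{m,n}(t_0-t_1))|$. The modewise inequality is therefore equivalent to the existence of $c''>0$ with $1/\|T_{m,n}^{-1}\|\ge c''\lambda_{m,n}^{-(r-s)/2}$; the exponent $(r-s)/2$ is forced because comparing the two quadratic forms involves the factor $\lambda_{m,n}^{r-s}$ under the square of the operator norm, which is why $\alpha=(r-s)/2$ appears.

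It then remains to convert the sine into a distance to the nearest integer and to rewrite the argument in terms of $\theta_1,\theta_2$. The adaptation of Lemma \ref{lemmasinus} already recorded above gives $|\sin(\lambda_{m,n}(t_0-t_1))|\asymp\|\lambda_{m,n}(t_0-t_1)/\pi\|$, so the condition becomes $\|\lambda_{m,n}(t_0-t_1)/\pi\|\ge c\,\lambda_{m,n}^{-(r-s)/2}$. Finally, from $\lambda_{m,n}=m^2\pi^2/a^2+n^2\pi^2/b^2$ one computes $\lambda_{m,n}(t_0-t_1)/\pi=m^2\theta_1+n^2\theta_2$, while $\lambda_{m,n}\asymp m^2+n^2$ allows one to replace $\lambda_{m,n}^{-(r-s)/2}$ by $(m^2+n^2)^{-(r-s)/2}$ after an adjustment of the constant; this is exactly \eqref{estplate}. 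I do not expect any serious obstacle, since every step is a routine transcription. The only place where the genuinely two-dimensional character of the plate intervenes is the invertibility step: whereas for the string invertibility of all $T_k$ reduced to the single condition that $(t_0-t_1)/\pi$ be irrational, here it requires $\sin(\lambda_{m,n}(t_0-t_1))\neq 0$ for the entire two-parameter family of eigenvalues. This causes no difficulty for the reformulation, however: if \eqref{estplate} holds, then $\|m^2\theta_1+n^2\theta_2\|$ is bounded away from zero for each $(m,n)$, so $m^2\theta_1+n^2\theta_2$ is never an integer and all the $T_{m,n}$ are automatically invertible, exactly as in the concluding observation of the proof of Proposition \ref{thmstring}.
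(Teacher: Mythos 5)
Your proposal is correct and is precisely the paper's intended argument: the paper proves Proposition \ref{thmplate} by noting that $\lambda_{m,n}(t_0-t_1)/\pi=m^2\theta_1+n^2\theta_2$, that $\lambda_{m,n}\asymp m^2+n^2$, and then ``applying the same method as for the vibrating string,'' which is exactly the mode-by-mode reduction via the maps $T_{m,n}$ that you carry out. Your closing remark on invertibility (that \eqref{estplate} forces $\sin(\lambda_{m,n}(t_0-t_1))\neq 0$ for all $m,n$) correctly transposes the concluding observation of the proof of Proposition \ref{thmstring}.
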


We give sufficient conditions for \eqref{estplate} to hold.
 
\emph{First case:  particular domains.} We assume that there exists a positive integer $N$ such that $\theta_1=N\theta_2$ or equivalently
\begin{equation*}
b^2=Na^2.
\end{equation*}
Therefore, setting $\theta:=\theta_2$, the estimation \eqref{estplate} simplifies in
\begin{equation*}
\|(Nm^2+n^2)\theta\|\ge \frac{c}{(m^2+n^2)^{(r-s)/2}},\qquad m,n=1,2,\ldots
\end{equation*}
We have already seen that if $r-s\ge 2$, the above estimation holds for some choices of $t_0$ and $t_1$. More precisely \emph{the Theorem  \ref{MRbeam} remains true in this case.}

\emph{Second (general) case.} It is not always possible to uncouple the expression $m^2\theta_1+n^2\theta_2$ as we did in the first case. Nevertheless, we can use some results on the approximation of linear forms by rationals.
\begin{thm}${}$
\begin{itemize}
\item[(a)]
Assume that $r-s=4$. If $t_0$ and $t_1$ are real numbers such that $\theta_1$ and $\theta_2$ belong to a real algebraic extension of $\Q$ of degree 3 and $1,\theta_1,\theta_2$ are linearly independent over the rationals, then $(t_0,t_1)$ is an strategic pair.
\item[(b)]
Assume that $r-s>4$. Then, almost all (in the sense of the Lebesgue measure) couples $(t_0,t_1)$ are strategic.
\end{itemize}
\end{thm}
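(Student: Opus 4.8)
The plan is to reduce both parts to the Diophantine estimate \eqref{estplate} of Proposition \ref{thmplate}. Writing $\alpha:=(r-s)/2$, this estimate asks for a constant $c>0$ with
\begin{equation*}
\Big\|m^2\theta_1+n^2\theta_2\Big\|\ge\frac{c}{(m^2+n^2)^{\alpha}},\qquad m,n=1,2,\ldots,
\end{equation*}
so everything comes down to bounding from below the fractional part of the linear form $m^2\theta_1+n^2\theta_2$ at the \emph{square} coefficients $(m^2,n^2)$.

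For part (a), where $\alpha=2$, I would invoke the second inequality of Lemma \ref{lemmaCassels} with $n=2$, $x_1=\theta_1$, $x_2=\theta_2$ and $(k_1,k_2)=(m^2,n^2)\in\Z^2\setminus\{(0,0)\}$; its hypotheses are exactly those assumed here, namely that $\theta_1,\theta_2$ lie in a real algebraic extension of $\Q$ of degree $3=n+1$ and that $1,\theta_1,\theta_2$ are $\Q$-linearly independent. The lemma produces $c>0$ (depending only on $\theta_1,\theta_2$) with $\|m^2\theta_1+n^2\theta_2\|\ge c(\max(m^2,n^2))^{-2}$, and since $\max(m^2,n^2)\le m^2+n^2$ this gives $\|m^2\theta_1+n^2\theta_2\|\ge c(m^2+n^2)^{-2}$, i.e.\ \eqref{estplate} with $\alpha=2$. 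Proposition \ref{thmplate} then shows that $(t_0,t_1)$ is strategic; I expect this part to be routine once the identification with the second Cassels inequality is made.

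For part (b) the delicate point is that $\theta_1=\pi(t_0-t_1)/a^2$ and $\theta_2=\pi(t_0-t_1)/b^2$ are \emph{both proportional} to $u:=t_0-t_1$, so $(\theta_1,\theta_2)$ runs over a fixed line rather than over an open planar set; a two-dimensional metric (Khintchine--Groshev) theorem therefore does not apply, and the problem is genuinely one-dimensional in $u$. I would rewrite
\begin{equation*}
m^2\theta_1+n^2\theta_2=u\,\mu_{m,n},\qquad \mu_{m,n}:=\pi\Big(\frac{m^2}{a^2}+\frac{n^2}{b^2}\Big)\asymp m^2+n^2,
\end{equation*}
and prove that for almost every $u$ there is $c>0$ with $\|u\,\mu_{m,n}\|\ge c(m^2+n^2)^{-\alpha}$ for all $m,n\ge1$; by translation invariance and Fubini this is equivalent to the assertion for almost every couple $(t_0,t_1)$. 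To this end I would run a convergence Borel--Cantelli argument: with $F_{m,n}:=\{u\in[0,1]:\|u\,\mu_{m,n}\|<(m^2+n^2)^{-\alpha}\}$, the lower bound $\mu_{m,n}\ge\pi(a^{-2}+b^{-2})$ makes $F_{m,n}$ a union of short intervals of total length $\mathrm{meas}(F_{m,n})\le C(m^2+n^2)^{-\alpha}$, with $C$ depending only on $a,b$. Since the number of $(m,n)$ with $m^2+n^2\le R$ is $\asymp R$, the series $\sum_{m,n}\mathrm{meas}(F_{m,n})$ is comparable to $\sum_{h}h^{1-2\alpha}$, which converges for $\alpha>1$, a fortiori for $\alpha>2$. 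Hence almost every $u$ lies in only finitely many $F_{m,n}$; after discarding the measure-zero set where $u\mu_{m,n}\in\Z$ for some $(m,n)$, a single constant $c$ is extracted as the minimum of $\|u\mu_{m,n}\|(m^2+n^2)^{\alpha}$ over these finitely many indices together with the value $1$ for all the others. This is \eqref{estplate} for almost every $u$, and Proposition \ref{thmplate} concludes.

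The main obstacle is precisely the one-dimensionality just flagged: one must not lose sight of the proportionality of $\theta_1$ and $\theta_2$, and must set up the metric argument in the single variable $u$ rather than on the plane. Once this is done the computation is elementary and in fact covers the wider range $r-s>2$; the restriction $r-s>4$ in the statement is what one obtains from the cruder route through a general two-variable linear-form theorem.
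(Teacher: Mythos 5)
Your part (a) is essentially the paper's own proof: both deduce the estimate \eqref{estplate} with $\alpha=(r-s)/2=2$ from the second inequality of Lemma \ref{lemmaCassels} applied with $(k_1,k_2)=(m^2,n^2)$, using $\max(m^2,n^2)\le m^2+n^2$, and then conclude by Proposition \ref{thmplate}; nothing to add there. For part (b), however, you take a genuinely different route, and the difference is material. The paper disposes of (b) in one line by citing a planar generalization of Lemma \ref{lemmadioph} (a Khintchine--Groshev type statement about almost every $(\theta_1,\theta_2)\in\R^2$). But, as you correctly flag, $\theta_1=\pi(t_0-t_1)/a^2$ and $\theta_2=\pi(t_0-t_1)/b^2$ are proportional: as $(t_0,t_1)$ runs over $\R^2$, the point $(\theta_1,\theta_2)$ stays on the fixed line $a^2\theta_1=b^2\theta_2$, a Lebesgue-null subset of the plane, so a full-measure statement in $\R^2$ gives, by itself, no information about almost every couple $(t_0,t_1)$; read literally, the paper's argument for (b) has a gap at exactly this point. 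Your one-dimensional Borel--Cantelli argument in the single variable $u=t_0-t_1$ --- bounding the measure of $F_{m,n}=\{u\in[0,1]:\|u\mu_{m,n}\|<(m^2+n^2)^{-\alpha}\}$ by $C(m^2+n^2)^{-\alpha}$ (valid since $\mu_{m,n}\ge\pi(a^{-2}+b^{-2})>0$), summing over $(m,n)$, discarding the countable set where some $u\mu_{m,n}\in\Z$, extracting a uniform constant from the finitely many exceptional indices, and transferring to $(t_0,t_1)$ by Fubini exactly as in the proof of Theorem \ref{MRclassicalstring} --- circumvents the issue entirely and is correct. It also buys a strictly better result: convergence of the series only requires $\alpha>1$, i.e.\ $r-s>2$, rather than $r-s>4$, which is consistent with what the paper obtains in its ``first case'' $b^2=Na^2$ and confirms your diagnosis that the threshold $4$ is an artifact of forcing the problem through a two-variable linear-form theorem.
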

\begin{proof}
The assertion (a) is a direct consequence of the Theorem \ref{thmplate} and  the second estimation of the Lemma \ref{lemmaCassels}. the assertion (b) is a consequence of the Theorem \ref{thmplate} and of a generalization of the Lemma \eqref{lemmadioph} (see \cite[p. 24]{Bugeaud}).
\end{proof}


\begin{thebibliography}{10}

\bibitem{Brezis2}
{\sc H.~Brezis}, {\em Functional Analysis, {S}obolev Spaces and Partial
  Differential Equations}, Universitext, Springer, New York, 2011.

\bibitem{Bugeaud}
{\sc Y.~Bugeaud}, {\em Approximation by Algebraic Numbers}, Cambridge
  University Press, 2004.

\bibitem{But1979}
{\sc A.~G. Butkovskiy}, {\em Certain control problems in distributed systems},
  in International Symposium on Systems Optimization and Analysis, L.~N.
  in~Control and I.~Sciences, eds., vol.~14, 1979, pp.~240--251.

\bibitem{Cassels}
{\sc J.~Cassels}, {\em An Introduction to Diophantine Approximation}, Cambridge
  University Press, 1957.

\bibitem{CourantHilbert1}
{\sc R.~Courant and D.~Hilbert}, {\em Methods of Mathematical Physics. {V}ol.
  {I}}, Interscience Publishers, Inc., New York, N.Y., 1953.

\bibitem{Egorov2008}
{\sc A.~I. Egorov}, {\em On the observability of elastic vibration of a beam},
  Zh. Vychisl. Mat. Mat. Fiz., 48 (2008), pp.~967--973.

\bibitem{Falconer}
{\sc K.~Falconer}, {\em Fractal Geometry : Mathematical Foundations and
  Applications}, John Wiley and Sons, 1990.

\bibitem{HarauxPonct}
{\sc A.~Haraux}, {\em Remarques sur la contr\^olabilit\'e ponctuelle et
  spectrale de syst\`emes distribu\'es}, Publication du Laboratoire d'Analyse
  Num\'erique.

\bibitem{Haraux1990}
\leavevmode\vrule height 2pt depth -1.6pt width 23pt, {\em Quelques m\'ethodes
  et r\'esultats r\'ecents en th\'eorie de la contr\^olabilit\'e exacte}, tech.
  report, INRIA, 1990.

\bibitem{Kom94}
{\sc V.~Komornik}, {\em Exact Controllability and Stabilization. The Multiplier
  Method}, Research in Applied Mathematics, Masson, Paris, 1994.

\bibitem{KL05}
{\sc V.~Komornik and P.~Loreti}, {\em Fourier Series in Control Theory},
  Springer Monographs in Mathematics, Springer-Verlag, New York, 2005.

\bibitem{KL11}
{\sc V.~Komornik and P.~Loreti}, {\em Multiple-point internal observability of
  membranes and plates}, Applicable Analysis, 90 (2011), pp.~1545--1555.

\bibitem{Lions1988}
{\sc J.-L. Lions}, {\em Exact controllability, stabilizability and
  perturbations for distributed systems}, SIAM Rev.,  (1988), pp.~1--68.

\bibitem{Lions1992}
{\sc J.-L. Lions}, {\em Poinwise control of distributed systems}, in Control
  and Estimation in Distributed Parameter Systems, H.~T. Banks, ed., SIAM,
  1992, pp.~1--39.

\bibitem{Russell1978}
{\sc D.~L. Russell}, {\em Controllability and stabilizability theory for linear
  partial differential equations: recent progress and open questions}, SIAM
  Rev., 20 (1978), pp.~639--739.

\bibitem{Shallit}
{\sc J.~O. Shallit}, {\em Some facts about continued fractions that should be
  better known}, Tech. Report CS-91-30, University of Waterloo, Department of
  Computer Science, July 1991.

\bibitem{SH12}
{\sc A.~Szij{{\'a}}rt{{\'o}} and J.~Heged{\H{u}}s}, {\em Observation problems
  posed for the {K}lein-{G}ordon equation}, Electron. J. Qual. Theory Differ.
  Equ.,  (2012), pp.~No. 7, 13.

\bibitem{Znamen2010}
{\sc L.~N. Znamenskaya}, {\em State observability of elastic vibrations of a
  string under boundary conditions of the first kind}, Differ. Uravn., 46
  (2010), pp.~743--747.

\end{thebibliography}
\end{document}